\newtheorem{theorem}{Theorem}
\newtheorem{proposition}{Proposition}
\newtheorem{lemma}{Lemma}
\newtheorem*{theorem*}{Proposition*}
\DeclareMathOperator{\nullity}{Null}
\journal{Linear Algebra and its Applications}
\begin{document}

\begin{frontmatter}

\title{A Block-Shifted Cyclic Reduction Algorithm for Solving a Class of Quadratic Matrix Equations}
\author{Xu Li}
\ead{lixu@lut.edu.cn}
\affiliation{organization={Department of Applied Mathematics, Lanzhou University of Technology},
            city={Lanzhou},
            postcode={730050},
            country={China}}
            
            \author{Beatrice Meini}
            \ead{beatrice.meini@unipi.it}
\affiliation{organization={Dipartimento di Matematica, Università di Pisa},
            city={Pisa},
            postcode={56127},
            country={Italy}}

\begin{abstract}
The cyclic reduction (CR) algorithm is an efficient method for solving quadratic matrix equations that arise in quasi-birth–death (QBD) stochastic processes. However, its convergence is not guaranteed when the associated matrix polynomial has more than one eigenvalue on the unit circle. To address this limitation, we introduce a novel iteration method, referred to as the Block-Shifted CR  algorithm, that improves the CR algorithm by utilizing singular value decomposition (SVD) and block shift-and-deflate techniques. This new approach extends the applicability of existing solvers to a broader class of quadratic matrix equations. Numerical experiments demonstrate the effectiveness and robustness of the proposed method.
\end{abstract}

\begin{keyword}
Quadratic matrix equations \sep Cyclic reduction \sep Block shift-and-deflate \sep Singular value decomposition \sep QZ algorithm


\MSC 15A24 \sep 65F45 \sep 65B99

\end{keyword}

\end{frontmatter}

\section{Introduction}

Let $A_i\in \mathbb{R}^{m\times m}$, $i=0,1,2$, be given matrices, and consider the quadratic matrix equation (QME)
\[
A_0 + A_1 X + A_2 X^2 = 0,
\]
where the unknown $X$ is an $m\times m$ matrix.
This unilateral QME represents an important class of nonlinear matrix problems, whose study dates back to the pioneering work of Sylvester \cite{sylvester1884}.
They arise in a broad range of applications, including quasi-birth-death (QBD) processes in stochastic models and queueing theory \cite{latouche1999,BLM05,Meini:survey}, algebraic Riccati equations (AREs) in control theory \cite{lancaster1995}, quadratic eigenvalue problems (QEPs) in vibration analysis and structural mechanics \cite{Lambda-matrices,Dennis1976,QEP}, and noisy Wiener-Hopf problems for Markov chains \cite{Wiener-Hopf1,Wiener-Hopf2}. 

A commonly used numerically stable approach for solving QMEs is to linearize them into associated generalized eigenvalue problems, which can then be efficiently solved using the QZ algorithm to compute all the solutions \cite{Higham2000,QEP,gvl}. However, the computational cost limits its practicality for large-scale problems.
Iterative methods, such as Newton’s method \cite{Higham2000,higham2001}, the Bernoulli iteration (BI) and its variants \cite{Higham2000,bai2007}, the fixed-point iterations \cite{Bai1997,Meini1997,Guo1999,Favati1999,Higham2000,BLM05,Bini2022}, offer greater efficiency but may converge slowly or stagnate in nearly null-recurrent QBDs.
An alternate to classical algorithms is
the cyclic reduction (CR) algorithm. Originally developed by Golub and Hockney for block tridiagonal systems \cite{buzbee1970,Hockney}, the CR algorithm was later extended to solve nonlinear matrix equations based on a functional representation \cite{BLM05,meini2006}. The central idea is to transform the nonlinear matrix equation into an equivalent semi-infinite, block Toeplitz, block tridiagonal or Hessenberg linear system and apply CR to solve this system. The comprehensive survey  \cite{BM:CRsurvey} provides a detailed account of the development and applications of CR.

The CR algorithm achieves high efficiency and robustness for solving QMEs, with its convergence behaviour closely related to the spectral properties of the associated matrix polynomial. When the eigenvalues of the associated matrix polynomial can be split into two sets---one inside and one outside the unit disk--the CR algorithm converges quadratically to the solution with minimal spectral radius, making it the preferred choice for many problems in queueing theory, particularly  for positive recurrent and transient QBDs \cite{BLM05,smcsolver}. As a variant of CR for QBD processes, the Logarithmic-Reduction (LR) algorithm \cite{LR} has the same convergence rate in view of the relationship between CR and LR \cite{CR-LR}. CR is also closely related to the Structured Doubling Algorithm \cite{huang2018,chiang2009}.

For null recurrent QBDs, the LR algorithm is proven to converge linearly with rate $1/2$ under two additional  assumptions \cite{guo2002}, thus the CR algorithm also exhibits linear convergence with the same rate. Here, the first assumption in \cite{guo2002} is that $\lambda=1$ is a simple eigenvalue of the sought solution and there are no other eigenvalues of the solution on the unit circle; the second assumption is more technical. To improve the convergence rate, a shifted CR (S-CR) algorithm \cite{HMR} was proposed for QBDs, which shifts the known eigenvalues away from the unit circle to restore quadratic convergence. Later, Guo \cite{guo2003} demonstrated that the S-CR algorithm still achieves quadratic convergence for null-recurrent QBDs. These methods form the basis  shift-based strategies for structured matrix equations \cite{bea:shift,BM:blockshift,Bini2017}. 

Nevertheless, for null-recurrent QBDs, when the associated polynomial has more than one eigenvalue on the unit circle, all of which are unknown, the applicability of existing CR-type solvers remains limited. This situation occurs when a suitable cyclicity index $\ell$ is greater than 1 (see \cite{ght96}). To address this limitation, we develop an enhanced CR algorithm, referred to as the Block-Shifted CR (\texttt{BS-CR}) algorithm, that combines singular value decomposition (SVD) with block shift-and-deflate techniques. In this framework, the CR algorithm, together with SVD, is employed to identify the invariant subspace of the sought solution associated with eigenvalues inside the open unit disk, while the block shift-and-deflate procedure separates this subspace from the subspace corresponding to eigenvalues on the unit circle. Since typically $\ell$ is much smaller then $m$, the resulting low-dimensional quadratic matrix equation, whose matrix polynomial maintains all eigenvalues of modulus one, is efficiently solved using the QZ algorithm, and the sought solution is subsequently reconstructed from the quantities obtained in this process. Consequently, this hybrid approach extends the applicability of CR-type solvers to a broader class of quadratic matrix equations and ensures reliable convergence even in challenging null-recurrent cases. Moreover, from the numerical experiments, the proposed approach is much more accurate than classical CR.

The rest of this paper is organized as follows: Section \ref{sec:Preliminaries} presents preliminaries and assumptions, including the classical CR algorithm and its convergence theory, and the block shift-and-deflate technique. The deflation of eigenvalues not lying on the unit circle is established in Section \ref{sec:defl}. The convergence properties of CR in the case of more than one eigenvalue on the unit circle are analyzed in Section~\ref{sec:convcr}. Section~\ref{CR and block shift-and-deflate} develops the \texttt{BS-CR} algorithm. Numerical experiments in Section~\ref{Numerical results} demonstrate the effectiveness of our algorithm, and conclusions are drawn in Section~\ref{Conclusions}.

\section{Preliminaries and assumptions}\label{sec:Preliminaries}

Throughout this paper, we use the following notation. Given a square matrix $A$, we denote by $\rho(A)$ its spectral radius and by $\sigma(A)$ the set of its eigenvalues. The identity matrix of size $n$ is denoted by $I_n$, and the zero matrix and the all-ones vector are denoted by $0$ and $\mathbf{1}$, respectively.

Given the $ m\times m$ real matrices  $A_i$, for $i=0,1,2$,  define  the quadratic matrix polynomial 
\begin{equation}\label{eq:qmp}
A(z)=A_0+z A_1+z^2 A_2
\end{equation}
and the scalar
polynomial
\begin{equation}\label{eq:scalpol}
a(z)=\det A(z).    
\end{equation}
Assume $a(z)$ not identically zero 
and denote its roots as $\lambda_i$, $i=1,\ldots,2m$, adding $s$ zeros at infinity if $a(z)$ has degree $2m-s$.
Without loss of generality, assume that the roots are ordered according to their modulus
as
\[
|  \lambda_1 | \le |\lambda_2 | \le \cdots \le | \lambda_{2m} |.
\]
The roots of $a(z)$ are called the eigenvalues of the matrix polynomial $A(z)$.
We associate with \eqref{eq:qmp} the QME
\begin{equation}\label{eq:qme}
A_0 + A_1 X + A_2 X^2 = 0,
\end{equation}
together with its reversed form
\begin{equation}\label{eq:rqme}
X^2 A_0 + X A_1 + A_2 = 0.
\end{equation}
The interest will be the computation of the solutions $G$ and $R$ of the QME \eqref{eq:qme} and \eqref{eq:rqme}, respectively, with minimal spectral radius, which are typically the solution of interest in the applications.

\subsection{Cyclic Reduction (CR)}
CR is an algorithm that can be efficiently applied to solve the QMEs \eqref{eq:qme} and \eqref{eq:rqme} in the case where $|\lambda_m|<|\lambda_{m+1}|$. In particular, assuming that \eqref{eq:qme} has a solution $G$ with $\rho(G)=|\lambda_m|$, and \eqref{eq:rqme} has a solution $R$ with $\rho(R)=1/|\lambda_{m+1}|$, CR generates two sequences of approximations quadratically convergent to $G$ and $R$, respectively.
For an introduction to CR and to its main properties, we refer the reader to \cite{BM:CRsurvey,BLM05}.

More specifically, CR consists of generating the sequences of matrices
\begin{equation}\label{eq:crgen}
    \begin{aligned}
& A_{0}^{(k+1)}=A_{0}^{(k)}\left(A_{1}^{(k)}\right)^{-1} A_{0}^{(k)}, \\
& A_{1}^{(k+1)}=A_{1}^{(k)}-A_{0}^{(k)}\left(A_{1}^{(k)}\right)^{-1} A_{2}^{(k)}-A_{2}^{(k)}\left(A_{1}^{(k)}\right)^{-1} A_{0}^{(k)}, \\
& A_{2}^{(k+1)}=A_{2}^{(k)}\left(A_{1}^{(k)}\right)^{-1} A_{2}^{(k)},\\
& \widehat{A}_{1}^{(k+1)}=\widehat{A}_{1}^{(k)}-A_{2}^{(k)}\left(A_{1}^{(k)}\right)^{-1} A_{0}^{(k)},
\end{aligned}
\end{equation}
assuming that $\det A_1^{(k)}\ne 0$, for $k=0,1, \ldots$, and $\widehat{A}_{1}^{(0)}=A_{1}, A_i^{(0)}=A_i, i=0,1,2$.

From the properties of CR, if $G$ and $R$ solves the QME \eqref{eq:qme} and \eqref{eq:rqme}, respectively, then 
\begin{equation}\label{eq:crqme}
\begin{split}
&     A_0^{(k)}+A_1^{(k)}G^{2^k}+A_2^{(k)} G^{2^{k+1}}=0,\\
& R^{2^{k+1}} A_0^{(k)}+R^{2^k}A_1^{(k)}+ A_2^{(k)}=0,\\
& A_{0}+\widehat{A}_{1}^{(k)} G + A_{2}^{(k)} G^{2^k+1}=0,\\
& A_{2}+ R \widehat{A}_{1}^{(k)}  + R^{2^k+1} A_{0}^{(k)} =0.
\end{split}
\end{equation}
In particular, if $\operatorname{det}\left(\widehat{A}_{1}^{(k)}\right) \neq 0$, from the latter equations we may recover
\[
\begin{split}
&G=-\left(\widehat{A}_{1}^{(k)}\right)^{-1}\left(A_{0}+A_{2}^{(k)} G^{2^k+1}\right),\\
&    R=-\left(A_{2}+R^{2^k+1}A_{0}^{(k)} \right) \left(\widehat{A}_{1}^{(k)}\right)^{-1}.
\end{split}  
\]

Concerning convergence properties, we recall the following result, which follows from Theorems 9 and 10 of \cite{BM:CRsurvey} (see also Theorem 7.6 in \cite[Section 7.3]{BLM05}), and expresses the quadratic convergence:

\begin{theorem}\label{thm:crconv_orig}
Assume that $|\lambda_m|<1<|\lambda_{m+1}|$ and the QME \eqref{eq:qme} and $A_0 X^2 + A_1 X+ A_2=0$ have solution $G$ and $S$, respectively, with $\rho(G)<1$ and $\rho(S)<1$. If CR can be carried out without a breakdown, then
\[
\begin{split}
    & \limsup_{k\to\infty}{\| A_0^{(k)}\|^{1/2^k}}\le \rho(G),\\
  &  \limsup_{k\to\infty}{\| A_2^{(k)}\|^{1/2^k}}\le \rho(S),\\
   & \limsup_{k\to\infty}{\| G-G_k\|^{1/2^k}}\le \rho(G)\rho(S),\\
   & \limsup_{k\to\infty}{\| R-R_k\|^{1/2^k}}\le \rho(G)\rho(S),
\end{split}
\]
where $G_k=-\left(\widehat{A}_{1}^{(k)}\right)^{-1}A_{0}$ and
$R_k=-A_2 \left(\widehat{A}_{1}^{(k)}\right)^{-1}$.
\end{theorem}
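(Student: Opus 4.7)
The plan is to recognize the statement as essentially a translation, into the notation of this paper, of results already established in Theorems~9 and~10 of \cite{BM:CRsurvey} and Theorem~7.6 of \cite{BLM05}. Accordingly, I would organize the proof as a verification that the hypotheses match, followed by a short re-derivation of the four inequalities from the functional identities in \eqref{eq:crqme}. This keeps the proof self-contained modulo two analytic facts that I would cite from the references.

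First I would use the identities in \eqref{eq:crqme}. Rearranging the first one gives $A_0^{(k)} = -A_1^{(k)} G^{2^k} - A_2^{(k)} G^{2^{k+1}}$. Assuming (this is one of the facts to be quoted) that the sequences $\{A_1^{(k)}\}$ and $\{A_2^{(k)}\}$ remain uniformly bounded under the splitting assumption $|\lambda_m|<1<|\lambda_{m+1}|$, I would deduce $\|A_0^{(k)}\|\le C\|G^{2^k}\|$ for a constant $C$ independent of $k$. Taking $(2^k)$-th roots and invoking Gelfand's formula $\lim_k\|G^k\|^{1/k}=\rho(G)$ yields $\limsup_k\|A_0^{(k)}\|^{1/2^k}\le\rho(G)$. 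The bound on $\|A_2^{(k)}\|$ is obtained by the symmetric argument. Swapping $A_0$ and $A_2$ in \eqref{eq:crgen} leaves the CR recursion invariant with the roles of $A_0^{(k)}$ and $A_2^{(k)}$ exchanged; applying \eqref{eq:crqme} to this reversed polynomial with the solution $S$ of $A_2+A_1 X+A_0 X^2=0$ produces the identity $A_2^{(k)} = -A_1^{(k)} S^{2^k} - A_0^{(k)} S^{2^{k+1}}$, and hence $\limsup_k\|A_2^{(k)}\|^{1/2^k}\le\rho(S)$.

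For the convergence of $G_k=-(\widehat{A}_1^{(k)})^{-1}A_0$, I would rewrite the third identity of \eqref{eq:crqme} as
\[
G - G_k = -(\widehat{A}_1^{(k)})^{-1} A_2^{(k)} G^{2^k+1}.
\]
Here the crucial analytic input is that $(\widehat{A}_1^{(k)})^{-1}$ stays uniformly bounded; in fact $\widehat{A}_1^{(k)}$ converges to a nonsingular matrix, specifically the central factor in the canonical Wiener--Hopf factorization of $A(z)$, under the splitting hypothesis. Combining this boundedness with the previously established estimate on $\|A_2^{(k)}\|$ and with $\|G^{2^k+1}\|^{1/2^k}\le (\|G^{2^k}\|\,\|G\|)^{1/2^k}\to\rho(G)$ yields $\limsup_k\|G-G_k\|^{1/2^k}\le\rho(S)\rho(G)$. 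The analogous rearrangement of the fourth identity in \eqref{eq:crqme} gives the corresponding bound for $\|R-R_k\|$.

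The main obstacle is not in the algebraic manipulations, which fall out immediately from \eqref{eq:crgen}--\eqref{eq:crqme}, but in the two analytic facts that (i) CR does not break down and $\{A_1^{(k)}\}$, $\{A_2^{(k)}\}$ stay bounded, and (ii) $\widehat{A}_1^{(k)}$ tends to a nonsingular limit so that $(\widehat{A}_1^{(k)})^{-1}$ remains bounded. Both rest on the Wiener--Hopf factorization of $A(z)$ available precisely when $|\lambda_m|<1<|\lambda_{m+1}|$, together with the solvability of the two QMEs with spectral radius strictly less than one. Since these are the content of the cited theorems in \cite{BM:CRsurvey,BLM05}, the role of the present proof is to quote them and to carry out the short computations above.
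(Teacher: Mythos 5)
Your proposal is correct and matches the paper's treatment: the paper offers no proof beyond citing Theorems 9 and 10 of \cite{BM:CRsurvey} and Theorem 7.6 of \cite{BLM05}, and your sketch is a faithful reconstruction of the standard argument in those references (the functional identities \eqref{eq:crqme}, boundedness of the CR sequences and of $(\widehat{A}_1^{(k)})^{-1}$, and Gelfand's formula). The only detail worth making explicit is that your rearrangement of the fourth identity naturally yields $\limsup_k\|R-R_k\|^{1/2^k}\le\rho(R)\rho(G)$, which coincides with the stated bound $\rho(G)\rho(S)$ because under the splitting hypothesis both $\rho(R)$ and $\rho(S)$ equal $1/|\lambda_{m+1}|$.
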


In the case where $\lambda_m=\lambda_{m+1}=1$, $|\lambda_{m-1}|<1$ and $|\lambda_{m+2}|>1$, under mild conditions, the convergence of CR turns to linear \cite{guo2002}.
The convergence is restored to quadratic by applying a shifting technique to remove the eigenvalue 1 from the eigenvalues of $A(z)$ (\cite{HMR}, \cite[Section 9.2]{BLM05}).

\subsection{Assumptions}\label{sec:ass}
Throughout the paper, we assume that, for a given $1\le \ell < m$, the polynomial
$a(z)$ has $\ell$ distinct roots $\mu_1,\ldots,\mu_\ell$ of modulus $1$, each with multiplicity 2, so that 
\begin{equation}\label{eq:roots_separation}
|  \lambda_{m-\ell} | < 1= |\mu_1 | = \cdots = |\mu_\ell|
<  | \lambda_{m+\ell+1} |.
\end{equation}
Assume that there exist unique solutions $G\in\mathbb{R}^{m\times m}$ and $R\in\mathbb{R}^{m\times m}$ to the matrix equations \eqref{eq:qme} and \eqref{eq:rqme}, respectively, 
 where $G$ has eigenvalues $\lambda_1,\ldots,\lambda_{m-\ell},\mu_1,\ldots,\mu_\ell$ and $R$ has eigenvalues $1/\lambda_{m+\ell+1},\ldots,1/\lambda_{2m}$, $1/\mu_1,\ldots,1/\mu_\ell$, with the convention that $1/\infty=0$ and $1/0=\infty$.
 The uniqueness of $G$ and $R$ implies that $A(\mu_i)$ has a kernel of dimension 1, for $i=1,\ldots,\ell$. The existence of such solutions $G$ and $R$ implies that $A(z)$
 can be factorized as
\[
A(z)=(I_m-zR)H(zI_m-G),
\]
where $H$ is a suitable nonsingular matrix; moreover, the matrix $A_1+A_2G $ is nosingular, and the matrices $G$ and $R$ are related by the equation (see \cite[Section 3.3]{BLM05}):
\begin{equation}\label{eq:GRrelation}
    R = -A_2(A_1 + A_2 G )^{-1}. 
\end{equation}

These assumptions on the roots and on the existence and uniqueness of $G$ and $R$ are naturally satisfied in null recurrent QBD stochastic processes, where
 $A_0=-E_0$, $A_1=I_m-E_1$, $A_2=-E_2$ and $E_i\in\mathbb{R}^{m\times m}$, for $i=0,1,2$, have nonnegative entries and such that $(E_0+E_1+E_2)\mathbf{1}=\mathbf{1}$,
and moreover the number $\ell$ is
\[
\ell = \max\{ k :
z^{-m/k} a(z^{1/k} ) \text{ is a (single valued) function in } z\in\mathbb{C},~|z| \le  1\}.
\]
In this case, $\mu_1,\ldots,\mu_\ell$ are the $\ell$-th roots of $1$. Moreover, $G$ and $R$ have nonnegative entries and, among all the possible solutions of \eqref{eq:qme} and \eqref{eq:rqme}, respectively, are the minimal ones according to the component-wise ordering (for details on these topics we refer the reader to \cite{latouche1999}, \cite[Section 4.4]{BLM05}, \cite{ght96}).

\subsection{Block shift-and-deflate technique}\label{sec:bsd}

Denote by $\operatorname{rev}(A(z))$ the reversed quadratic matrix polynomial defined by
\[
\operatorname{rev}(A(z)) = z^{2}A\!\left(z^{-1}\right) = A_{2} + zA_{1} + z^{2}A_{0},
\]
which is obtained by reversing the order of the coefficient matrices of $A(z)$. 
It is straightforward to verify that, for any $\lambda \in \mathbb{C}\setminus\{0\}$, $\lambda$ is an eigenvalue of $A(z)$ if and only if $\lambda^{-1}$ is an eigenvalue of $\operatorname{rev}(A(z))$. Under the convention that $1/\infty = 0$ and $1/0 = \infty$, the eigenvalue $\infty$ of $A(z)$ corresponds to the eigenvalue $0$ of $\operatorname{rev}(A(z))$, and vice-versa.

The following result from \cite{BM:blockshift} describes how a set of eigenvalues of $A(z)$ can be shifted to $0$ via a right transformation.

\begin{lemma}\label{Lem:block_right_shift_0}\cite[Section 3]{BM:blockshift}
Let $1\le q<m$, 
$S_2\in \mathbb{C}^{q\times q}$,
and $V_2\in \mathbb{C}^{m\times q}$ be full rank matrices such that
\[
A_0 V_2 + A_1 V_2 S_2 + A_2 V_2 S_2^2=0.
\]
Let $Y\in\mathbb{C}^{q\times m}$ such that $Y V_2=I_q$.
Then the function
\[
A^{(R)}(z)= A(z)\left(I_m+V_2 (zI_q-S_2)^{-1}S_2Y \right)    
\]
is a quadratic matrix polynomial, having $q$ eigenvalues at $0$,  and matrix coefficients
\[
\begin{cases}
A^{(R)}_0=A_0-A_0Q, \\
A^{(R)}_1=A_1+A_2V_2S_2Y, \\
A^{(R)}_2=A_2,
\end{cases}    
\]
where  $Q=V_2 Y$.
Moreover, the eigenvalues of $A^{(R)}(z)$ are those of $A(z)$ except for the eigenvalues of $S_2$, which are moved to $0$. In particular, 
$A^{(R)}_0 V_2=0.$       
\end{lemma}

An analogous result holds for the left eigenvectors. Although not explicitly stated in \cite{BM:blockshift}, the following lemma can be obtained by arguments similar to those used for the right transformation, allowing a set of eigenvalues to be shifted to $0$.

\begin{lemma}\label{Lem:block_left_shift_0}
Let $1\le q<m$, 
$S_1\in \mathbb{C}^{q\times q}$,
and $U_1\in \mathbb{C}^{q\times m}$ be full rank matrices such that
\[
U_1 A_0  + S_1 U_1 A_1  + S_1^2U_1 A_2=0.
\]
Let $X\in \mathbb{C}^{m\times q}$ such that $ U_1 X=I_q$.
Then the function
\[
A^{(L)}(z)= \left(I_m+XS_1(zI_q-S_1)^{-1} U_1 \right)A(z)    
\]
is a quadratic matrix polynomial, having $q$ eigenvalues at $0$,  and matrix coefficients
\[
\begin{cases}
A^{(L)}_0=A_0-PA_0, \\
A^{(L)}_1=A_1+XS_1U_1A_2, \\
A^{(L)}_2=A_2,
\end{cases}   
\]
where  $P=X U_1$.
Moreover, the eigenvalues of $A^{(L)}(z)$ are those of $A(z)$ except for the eigenvalues of $S_1$, which are moved to $0$. In particular, 
$U_1 A^{(L)}_0=0.$   
\end{lemma}

The next lemma addresses the case where certain eigenvalues of $\operatorname{rev}(A(z))$ are shifted to $0$, which is mathematically equivalent to shifting the corresponding eigenvalues of $A(z)$ to infinity.

\begin{lemma}\label{Lem:block_left_shift_infinity_new}
Let $1\le q<m$, 
$S_1\in \mathbb{C}^{q\times q}$ 
and let $U_1\in \mathbb{C}^{q\times m}$ be a full rank matrix such that
\[
S_1^2 U_1 A_0  + S_1 U_1 A_1  + U_1 A_2=0.
\]
Let $X\in \mathbb{C}^{m\times q}$ such that $ U_1 X=I_q$.
Then the function
\[
\widehat{A}^{(L)}(z)= \left(I_m+zXS_1 (I_q-z S_1)^{-1} U_1 \right)A(z)    
\]
is a quadratic matrix polynomial, having $q$ eigenvalues at infinity,  and matrix coefficients
\[
\begin{cases}
\widehat{A}^{(L)}_0=A_0, \\
\widehat{A}^{(L)}_1=A_1+XS_1 U_1A_0, \\
\widehat{A}^{(L)}_2=A_2-PA_2.
\end{cases}   
\]
where  $P=X U_1$.
Moreover, the eigenvalues of $\operatorname{rev}(\widehat{A}^{(L)}(z))$ are those of $\operatorname{rev}(A(z))$ except for the eigenvalues of $S_1$, which are moved to $0$. In particular, 
$U_1 \widehat{A}^{(L)}_2=0.$   
\end{lemma}
\begin{proof}
Applying Lemma \ref{Lem:block_left_shift_0} to the reversed matrix polynomial $\operatorname{rev}(A(z))$, by shifting the eigenvalues of $S_1$ to $0$, the matrix polynomial $\widehat{A}^{(L)}(z)$ is the reversed polynomial of \[\left(I_m+XS_1(zI_q-S_1)^{-1} U_1 \right)\operatorname{rev}(A(z)).\]
\end{proof}

The following theorem generalizes the previous results by providing a unified transformation that simultaneously shifts two sets of eigenvalues, associated with right and left eigenvectors, to $0$ and infinity, respectively.

\begin{theorem}
\label{th3.2}
Let $1\le q<m$, 
$S_1\in \mathbb{C}^{q\times q}$, $S_2\in \mathbb{C}^{q\times q}$,
and $U_1\in \mathbb{C}^{q\times m}$, 
$V_2\in \mathbb{C}^{m\times q}$ be full rank matrices such that
\[
S_1^2 U_1 A_0  + S_1 U_1 A_1  +  U_1 A_2=0,~~
A_0 V_2 + A_1 V_2 S_2 + A_2 V_2 S_2^2=0.
\]
Let $X\in \mathbb{C}^{m\times q}$, $Y \in\mathbb{C}^{q\times m}$ such that $ U_1 X=Y V_2=I_q$.
 Then the function
\[
\widetilde{A}(z)= \left(I_m+zXS_1 (I_q-z S_1)^{-1} U_1 \right)A(z)\left(I_m+V_2 (zI_q-S_2)^{-1}S_2Y \right)    
\]
is a quadratic matrix polynomial, having $q$ eigenvalues at $0$, $q$ eigenvalues at infinity,  and matrix coefficients
\[
\begin{cases}
\widetilde{A}_0=A_0-A_0Q, \\
\widetilde{A}_1=A_1+A_2V_2S_2Y +XS_1U_1 {\widetilde{A}_0}, \\
\widetilde{A}_2=A_2-PA_2.
\end{cases}    
\]
where  $P=X U_1$, $Q=V_2 Y$.
Moreover, among the eigenvalues of $A(z)$, those corresponding to $S_2$ are moved to $0$. Similarly, among the eigenvalues of $\operatorname{rev}(A(z))$, those corresponding to $S_1$ are moved to $0$. In particular, 
$\widetilde{A}_0 V_2=0$
and 
$U_1 \widetilde{A}_2=0.$   
\end{theorem}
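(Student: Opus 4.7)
The plan is to recognize that $\widetilde{A}(z)$ is the product $L(z)A(z)R(z)$ with
\[
L(z)=I_m+zX(S_1-zI_q)^{-1}U_1,\qquad R(z)=I_m+V_2(zI_q-S_2)^{-1}S_2Y.
\]
I would proceed in three stages: first show that $\widetilde{A}(z)$ is actually a matrix polynomial of degree two, then identify its three coefficient matrices, and finally track how the eigenvalues move.

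For polynomiality, I would perform two noncommutative polynomial divisions. On the left, I would divide the degree-two polynomial $U_1A(z)$ by the linear pencil $S_1-zI_q$; carrying this out explicitly shows that the constant remainder is precisely the left-hand side of the first spectral identity in the statement, so the assumption forces the remainder to vanish. Hence $(S_1-zI_q)^{-1}U_1A(z)$ is a polynomial of degree one, and therefore $L(z)A(z)$ is a quadratic matrix polynomial. Symmetrically, right-dividing $A(z)V_2$ by $zI_q-S_2$ produces a constant remainder equal to the left-hand side of the second identity, which also vanishes, so $A(z)R(z)$ is polynomial of degree two. The full product $L(z)A(z)R(z)$ then remains polynomial because the only mixed term that could introduce new singularities, namely $X(S_1-zI_q)^{-1}U_1A(z)V_2(zI_q-S_2)^{-1}S_2Y$, contains $U_1A(z)V_2$, which is simultaneously left-divisible by $S_1-zI_q$ and right-divisible by $zI_q-S_2$.

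Once $\widetilde{A}(z)$ is known to be of degree at most two, I would extract its coefficients by expanding the triple product and collecting powers of $z$. The projection identities $P^2=P$ and $Q^2=Q$ (consequences of $U_1X=I_q$ and $YV_2=I_q$) streamline the algebra and reproduce the three displayed formulas for $\widetilde{A}_0,\widetilde{A}_1,\widetilde{A}_2$. The final relations $U_1\widetilde{A}_2=0$ and $\widetilde{A}_0V_2=0$ are then immediate from the rewritten forms $\widetilde{A}_2=(I_m-P)A_2$ and $\widetilde{A}_0=A_0(I_m-Q)$, combined with $U_1X=I_q$, $YV_2=I_q$.

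For the eigenvalue movement I would apply Sylvester's determinant identity to obtain $\det L(z)=\det(S_1)/\det(S_1-zI_q)$ and $\det R(z)=z^q/\det(zI_q-S_2)$, and therefore
\[
\det\widetilde{A}(z)=\frac{\det(S_1)\,z^q\,\det A(z)}{\det(S_1-zI_q)\,\det(zI_q-S_2)}.
\]
Since the left-hand side has already been shown to be polynomial, the apparent poles at the eigenvalues of $S_1$ and $S_2$ must be cancelled by matching roots of $\det A(z)$; these cancelled roots are precisely the eigenvalues removed from the spectrum. The factor $z^q$ then accounts for the $q$ new roots at $0$, and the net drop of $q$ in the degree of $\det\widetilde{A}(z)$ accounts for the $q$ new roots at infinity, while all other roots of $\det A(z)$ are preserved. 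I expect the main obstacle to be the joint polynomiality step: while each one-sided factor separately yields a polynomial, one still has to verify that left-multiplication by $L(z)$ and right-multiplication by $R(z)$ can be performed simultaneously without reintroducing poles, and this is the only place where the two spectral assumptions must genuinely cooperate rather than act independently.
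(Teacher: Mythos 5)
The paper offers no proof of this theorem at all: it is imported verbatim from the cited block-shift reference, so there is no internal argument to compare against. Your outline follows what is essentially the standard proof of that result, and the skeleton is sound: two one-sided polynomial divisions to establish that $L(z)A(z)$ and $A(z)R(z)$ are quadratic, coefficient collection, and the Sylvester determinant identity for the spectrum. Two remarks before the main point. The first displayed hypothesis is garbled in the paper ($S_2A_2$ is not even conformable); the remainder you actually obtain by left-dividing $U_1A(z)$ by $S_1-zI_q$ is $U_1A_0+S_1U_1A_1+S_1^2U_1A_2$, which is what the application in Section~3 supplies (multiply \eqref{10} on the left by $S_1^2$), so you should say explicitly that this is the identity you are matching rather than ``the left-hand side of the first spectral identity.'' Likewise, your determinant formula correctly shows that the finite eigenvalues traded for points at infinity are those of $S_1$, not of $S_1^{-1}$ as the statement reads.

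The step you yourself flag as ``the main obstacle'' is a genuine gap, and the justification you offer for it is false as stated: simultaneous left-divisibility of $U_1A(z)V_2$ by $S_1-zI_q$ and right-divisibility by $zI_q-S_2$ does \emph{not} imply that $(S_1-zI_q)^{-1}U_1A(z)V_2(zI_q-S_2)^{-1}$ is pole-free. Counterexample: $m=2$, $q=1$, $A(z)=z(z-1)I_2$, $S_1=S_2=1$, $U_1=Y=\begin{bmatrix}1&0\end{bmatrix}$, $V_2=X=\begin{bmatrix}1&0\end{bmatrix}^\top$. Both solvent identities hold and $U_1A(z)V_2=z(z-1)$ is divisible by $S_1-z$ on the left and by $z-S_2$ on the right, yet the mixed term equals $-\frac{z^2}{z-1}XY$ and $\widetilde A(z)$ has a pole at $z=1$. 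The correct argument is a Liouville/Sylvester one: writing $U_1A(z)=(S_1-zI_q)\widehat Q(z)$ and $A(z)V_2=\widehat P(z)(zI_q-S_2)$ with $\widehat Q,\widehat P$ linear, the function $N(z)=(S_1-zI_q)^{-1}U_1A(z)V_2(zI_q-S_2)^{-1}$ equals $\widehat Q(z)V_2(zI_q-S_2)^{-1}$ (poles only in $\sigma(S_2)$) and also $(S_1-zI_q)^{-1}U_1\widehat P(z)$ (poles only in $\sigma(S_1)$), and it is bounded at infinity; hence if $\sigma(S_1)\cap\sigma(S_2)=\emptyset$ it is the constant $-U_1A_2V_2$. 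That constant value is exactly what you need both for polynomiality and to reconcile the $z$-coefficient with the stated $\widetilde A_1$ (evaluating $N$ at $z=0$ gives $S_1^{-1}U_1A_0V_2=U_1A_2V_2S_2$, which your ``streamlined algebra'' silently requires). So your proof needs the additional hypothesis $\sigma(S_1)\cap\sigma(S_2)=\emptyset$, which is missing from the statement as printed but holds in the paper's application, where $\sigma(S_1)$ lies outside and $\sigma(S_2)$ inside the unit circle.
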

\begin{proof}
The conclusions follow immediately by successively applying the right shift transformation from Lemma \ref{Lem:block_right_shift_0} and the left shift transformation from Lemma \ref{Lem:block_left_shift_infinity_new} to the matrix polynomial $A(z)$.
\end{proof}

\begin{theorem}\label{th5}
Assume that the $m \times m$ matrix polynomial $A(z)=A_0+z A_1+z^2 A_2$ can be factorized as
$$
A(z)=(I_m-zR)H(zI_m-G)
$$
where $R, G$ and $H$ are $m \times m$ matrices. Let $S_1, S_2, U_1, V_2$, and $\widetilde{A}(z)$ be as in Theorem~\ref{th3.2}. Moreover, assume that $U_1 R=S_1 U_1$ and $G V_2= V_2S_2$. Then the matrix polynomial $\widetilde{A}(z)$ can be factorized as 
$$\widetilde{A}(z)=(I_m-z\widetilde{R})H(zI_m-\widetilde{G}),$$ 
where $\widetilde{G}=G-V_2S_2 Y$ and $\widetilde{R}=R-XS_1 U_1$.
\end{theorem}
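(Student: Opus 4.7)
The plan is to split the product $\widetilde{A}(z)$ into three factors and use the hypothesized factorization of $A(z)$ in the middle. Writing
$L(z):=I_m+zX(S_1-zI_q)^{-1}U_1$ and $M(z):=I_m+V_2(zI_q-S_2)^{-1}S_2 Y$,
so that $\widetilde{A}(z)=L(z)A(z)M(z)$, substitution of $A(z)=(I_m-zR)W(zI_m-G)$ gives
\[
\widetilde{A}(z)=\bigl[L(z)(I_m-zR)\bigr]\,W\,\bigl[(zI_m-G)M(z)\bigr].
\]
The theorem therefore reduces to two independent rational-matrix identities, one on each side of $W$.

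First I would verify that $L(z)(I_m-zR)=I_m-z\widetilde{R}$. Expanding, the nontrivial term is $zX(S_1-zI_q)^{-1}U_1(I_m-zR)$. Using the hypothesis $U_1R=S_1^{-1}U_1$, the bracket becomes $(I_q-zS_1^{-1})U_1=S_1^{-1}(S_1-zI_q)U_1$. Since $(S_1-zI_q)^{-1}$ and $S_1^{-1}$ are both functions of $S_1$, they commute, and the factor $(S_1-zI_q)$ cancels with its inverse, leaving $zXS_1^{-1}U_1$. Hence
\[
L(z)(I_m-zR)=(I_m-zR)+zXS_1^{-1}U_1=I_m-z(R-XS_1^{-1}U_1)=I_m-z\widetilde{R}.
\]

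The second identity $(zI_m-G)M(z)=zI_m-\widetilde{G}$ is verified symmetrically. Expanding, the nontrivial term is $(zI_m-G)V_2(zI_q-S_2)^{-1}S_2 Y$. By the hypothesis $GV_2=V_2S_2$, we have $(zI_m-G)V_2=V_2(zI_q-S_2)$, and $(zI_q-S_2)(zI_q-S_2)^{-1}=I_q$ collapses the middle, yielding
\[
(zI_m-G)M(z)=(zI_m-G)+V_2S_2Y=zI_m-(G-V_2S_2Y)=zI_m-\widetilde{G}.
\]

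Assembling the two identities gives directly $\widetilde{A}(z)=(I_m-z\widetilde{R})\,W\,(zI_m-\widetilde{G})$, which is the claimed factorization. The main point to be careful about—hardly an obstacle—is the commutativity argument that justifies cancelling $(S_1-zI_q)$ against $(S_1-zI_q)^{-1}$ through $S_1^{-1}$ (and the analogous step on the right). Notice that the proof uses only the invariance relations $U_1R=S_1^{-1}U_1$ and $GV_2=V_2S_2$, not the compatibility equations on $S_1,S_2,U_1,V_2$ stated in Theorem~\ref{th3.2}; the latter are needed only to ensure that $\widetilde{A}(z)$ defined through the rational factors $L(z),M(z)$ is in fact a polynomial with the coefficient formulas given there.
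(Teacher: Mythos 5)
Your proof is correct: the paper itself states Theorem~\ref{th5} without proof (citing the reference from which it is taken), and your argument is the standard direct verification, splitting $\widetilde{A}(z)=L(z)A(z)M(z)$ and absorbing $L(z)$ and $M(z)$ into the left and right linear factors via the invariance relations $U_1R=S_1^{-1}U_1$ and $GV_2=V_2S_2$. Both identities check out, including the commutation of $S_1^{-1}$ with $(S_1-zI_q)^{-1}$, so there is nothing to add.
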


From the above theorem, it follows that the matrices $\widetilde G$ and $\widetilde{R}$ are the solutions of the QME $\widetilde A_0 + \widetilde A_1 X+ \widetilde A_2 X^2=0$ and $X^2 \widetilde A_0 + X \widetilde A_1 + \widetilde A_2 =0$, respectively, with eigenvalues the eigenvalues of $G$ and $R$, except for the ones coinciding with the eigenvalues of $S_2$ and $S_1$, respectively, which are replaced by 0.

\section{Deflation of the eigenvalues not lying on the unit circle}\label{sec:defl}
Here we assume that the conditions stated in
Section~\ref{sec:ass} hold.
We show that if the right and left invariant subspace of $G$ and $R$, respectively, corresponding to the eigenvalues inside the unit circle, is known, then the computation of $G$ and $R$ can be reduced to solving a QME whose solution has eigenvalues lying on the unit circle only.

Let $W_{G,1}\in\mathbb{R}^{m\times (m-\ell)}$ be a full rank matrix such that
\begin{equation}\label{G_invariant}
G W_{G,1} = W_{G,1}\Lambda_{G,1},   
\end{equation}
where $\Lambda_{G,1}\in\mathbb{R}^{(m-\ell)\times (m-\ell)}$ and $\sigma(\Lambda_{G,1})=\{\lambda_1,\ldots,\lambda_{m-\ell}\}$.
Similarly, let $T_{R,1}\in\mathbb{R}^{(m-\ell)\times m}$ be a full rank matrix such that
\begin{equation}\label{R_invariant}
T_{R,1} R =  \Lambda_{R,1} T_{R,1},
\end{equation}
where $\Lambda_{R,1}\in\mathbb{R}^{(m-\ell)\times (m-\ell)}$ and $\sigma(\Lambda_{R,1})=\{1/\lambda_{m+\ell+1},\ldots,1/\lambda_{2m}\}$.
Without loss of generality, the matrices $W_{G,1}$ and $T_{R,1}^\top$ can be assumed to have orthonormal columns, so that $W_{G,1}^\top W_{G,1}=T_{R,1} T_{R,1}^\top=I_{m-\ell}$.
Therefore, the columns of $W_{G,1}$  span the right invariant subspace of $G$ corresponding to the eigenvalues of modulus less than 1, while the rows of $T_{R,1}$  span the left invariant subspace of $R$ corresponding to the eigenvalues of modulus less than 1.

In particular, since $G$ and $R$ solve \eqref{eq:qme} and \eqref{eq:rqme}, respectively, we have the following:
\begin{equation}\label{10}
A_0 W_{G,1}+A_1 W_{G,1}\Lambda_{G,1}+
A_2 W_{G,1}\Lambda_{G,1}^2=0,~~~
\Lambda_{R,1}^2 T_{R,1}A_0+
\Lambda_{R,1} T_{R,1}A_1 +
 T_{R,1}A_2 = 0.    
\end{equation}

By applying Theorem~\ref{th3.2} with
$S_1={\Lambda}_{R,1}$, $U_1=X^\top=T_{R,1}$,  $S_2={\Lambda}_{G,1}$, $V_2=Y^\top=W_{G,1}$, we find
that the function
\[
\widetilde{A}(z)= \left(I_m+z T_{R,1}^\top {\Lambda}_{R,1}(I_q-z{\Lambda}_{R,1})^{-1} T_{R,1}\right)A(z)\left(I_m+W_{G,1}(zI_q-{\Lambda}_{G,1})^{-1}{\Lambda}_{G,1}W_{G,1}^\top\right)    
\]
is a quadratic matrix polynomial, having $m-\ell$ eigenvalues equal to 0, 
$m-\ell$ eigenvalues equal to infinity, and the remaining eigenvalues equal to $\mu_1,\ldots,\mu_\ell$, each with multiplicity 2. The matrix coefficients are
\[
\begin{cases}
\widetilde{A}_0=A_0-A_0W_{G,1}W_{G,1}^\top, \\
\widetilde{A}_1=A_1+A_2W_{G,1}{\Lambda}_{G,1}W_{G,1}^\top+T_{R,1}^\top{\Lambda}_{R,1}T_{R,1}\widetilde{A}_0, \\
\widetilde{A}_2=A_2-T_{R,1}^\top T_{R,1} A_2.
\end{cases}    
\]
Moreover, 
\[
\widetilde{A}_0W_{G,1}=0,~~T_{R,1}\widetilde{A}_2=0.
\]

In addition, it follows that the matrices $\widetilde{G}=G-W_{G,1}{\Lambda}_{G,1} W_{G,1}^\top$ and $\widetilde{R}=R-T_{R,1}^\top {\Lambda}_{R,1}T_{R,1}$ are the solutions of the quadratic matrix equations 
\[
\widetilde{A}_{0}+\widetilde{A}_{1} X+\widetilde{A}_{2} X^2=0,   
\]
and
\[
X^2\widetilde{A}_{0}+X\widetilde{A}_{1} +\widetilde{A}_{2} =0,   
\]
respectively, with
$\sigma(\widetilde{G})=\{0,\ldots,0,\mu_1,\ldots,\mu_\ell\}$, $\sigma(\widetilde{R})=\{0,\ldots,0,1/\mu_1,\ldots,1/\mu_\ell\}$, and $\widetilde A(z)$ can be factorized as
\begin{equation}\label{eq:facat}
\widetilde{A}(z)=(I_m-z\widetilde{R}) \widetilde H(zI_m-\widetilde{G})
\end{equation}
for a suitable nonsingular matrix $\widetilde H$.

Now, define the $m\times m$ matrices 
\[
W_G=\begin{bmatrix}
 W_{G,2} & W_{G,1}
\end{bmatrix} \quad {\text{and}} \quad T_R=\begin{bmatrix}
 T_{R,2} \\ T_{R,1}
\end{bmatrix},
\]
where $W_{G,2}\in\mathbb{R}^{m\times \ell}$ and $T_{R,2}\in\mathbb{R}^{\ell \times m}$ are such that $W_G^\top W_G=T_R T_R^\top=I_m$.

Define the matrix polynomial
$$\bar{A}(z)=T_R \widetilde{A}(z) W_G.$$ 
By multiplying \eqref{eq:facat} on the left by $T_R$ and on the right by $W_G$, we find that $\bar{A}(z)$ can be factorized as 
$$\bar{A}(z)=(I_m-z\bar{R})\bar{H}(zI_m-\bar{G}),$$ 
where 
\begin{equation}\label{eq:gbar}
\bar{G}=W_G^\top\widetilde{G}W_G, ~~\bar{R}=T_R\widetilde{R}T_R^\top,
\end{equation}
and $\bar{H}=T_R \widetilde{H} W_G$.
It follows that $\bar{G}$ and $\bar{R}$ solve 
the quadratic matrix equations
\[
\bar{A}_{0}+\bar{A}_{1} X+\bar{A}_{2} X^2=0,
\]
\[
X^2 \bar{A}_{0}+X \bar{A}_{1}+\bar{A}_{2}=0,
\]
respectively, with $\bar{A}_{i}=T_R\widetilde{A}_{i}W_G$, for $i=0,1,2.$

\begin{proposition}
The matrices $\bar{G}$ and $\bar{R}$, defined in \eqref{eq:gbar}, have the structure
\begin{equation}\label{17}
\bar{G}=\left[\begin{array}{ll}
\bar{G}_{11}  & 0 \\
\bar{G}_{21} & 0
\end{array}\right],~~
\bar{G}_{11} =  W_{G,2}^\top G W_{G,2},~~
\bar{G}_{21} = W_{G,1}^\top G W_{G,2},
\end{equation}
and
\begin{equation}\label{18}
\bar{R}=\left[\begin{array}{ll}
\bar{R}_{11} & \bar{R}_{12} \\
0 & 0
\end{array}\right],~~
\bar{R}_{11} = T_{R,2} R T_{R,2}^\top,~~
\bar{R}_{12} = T_{R,2} R T_{R,1}^\top.
\end{equation}
\end{proposition}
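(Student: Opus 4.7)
The plan is to prove both identities by direct block computation, exploiting the orthonormality of $W_G$ and $T_R$ together with the invariant subspace relations \eqref{G_invariant} and \eqref{R_invariant}. I would treat $\bar G$ and $\bar R$ in parallel, since the arguments are completely symmetric.

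First, I would substitute the definition $\widetilde G = G - W_{G,1}\Lambda_{G,1}W_{G,1}^\top$ into $\bar G = W_G^\top \widetilde G W_G$ to split $\bar G$ as $W_G^\top G W_G - W_G^\top W_{G,1}\Lambda_{G,1} W_{G,1}^\top W_G$. Using the orthonormality relations $W_{G,1}^\top W_{G,1}=I_{m-\ell}$ and $W_{G,2}^\top W_{G,1}=0$, I would observe that $W_G^\top W_{G,1}=\begin{bmatrix}0\\ I_{m-\ell}\end{bmatrix}$, so the correction term reduces to $\begin{bmatrix} 0 & 0\\ 0 & \Lambda_{G,1}\end{bmatrix}$.

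Next, I would expand $W_G^\top G W_G$ blockwise. Using $GW_{G,1}=W_{G,1}\Lambda_{G,1}$ together with the orthonormality relations, the $(1,2)$ block becomes $W_{G,2}^\top W_{G,1}\Lambda_{G,1}=0$ and the $(2,2)$ block becomes $W_{G,1}^\top W_{G,1}\Lambda_{G,1}=\Lambda_{G,1}$. Subtracting the correction term cancels the $(2,2)$ block exactly, leaving the structure in \eqref{17} with the stated formulas for $\bar G_{11}$ and $\bar G_{21}$. The derivation of \eqref{18} is entirely analogous: write $\bar R = T_R R T_R^\top - T_R T_{R,1}^\top \Lambda_{R,1} T_{R,1} T_R^\top$, use $T_R T_{R,1}^\top=\begin{bmatrix} 0\\ I_{m-\ell}\end{bmatrix}$ for the correction term, and apply $T_{R,1}R=\Lambda_{R,1}T_{R,1}$ to the block expansion of $T_R R T_R^\top$ to kill the $(2,1)$ block and match the $(2,2)$ block of the correction.

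There is really no obstacle here beyond bookkeeping: the two block rows or columns of $W_G$ (respectively $T_R$) play asymmetric roles because only $W_{G,1}$ (respectively $T_{R,1}$) spans the invariant subspace appearing in $\widetilde G$ (respectively $\widetilde R$), and the task is to keep the $(1,1)$ vs $(2,1)$ and $(1,1)$ vs $(1,2)$ placements consistent with the layout of $W_G=[W_{G,2}\ W_{G,1}]$ and $T_R=\begin{bmatrix}T_{R,2}\\ T_{R,1}\end{bmatrix}$. Once the orthonormality and invariant subspace identities are applied correctly, formulas \eqref{17} and \eqref{18} follow immediately.
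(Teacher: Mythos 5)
Your proposal is correct and follows essentially the same route as the paper's proof: a direct block computation using the orthonormality relations $W_{G,1}^\top W_{G,2}=0$, $W_{G,1}^\top W_{G,1}=I_{m-\ell}$ (and their analogues for $T_R$) together with the invariance relations \eqref{G_invariant} and \eqref{R_invariant}. The only cosmetic difference is that you split $\bar G$ into $W_G^\top G W_G$ minus a correction term and cancel the $(2,2)$ blocks, whereas the paper first simplifies $\widetilde G\,W_G$ to $[G W_{G,2}~~0]$ and then multiplies by $W_G^\top$; the two computations are equivalent.
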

\begin{proof}
A direct computation shows that
\begin{equation*}
    	\begin{split}
\bar{G}&=W_G^\top\widetilde{G} W_G
=W_G^\top\left(G-W_{G,1}{\Lambda}_{G,1} 
W_{G,1}^\top \right)[W_{G,2} ~ 
W_{G,1}]\\
          &=W_G^\top[G W_{G,2} ~ G W_{G,1}-W_{G,1}{\Lambda}_{G,1}]
            =\left[\begin{array}{ll}
W_{G,2}^\top \\
W_{G,1}^\top
\end{array}\right][G W_{G,2} ~ 0]\\
            &=\left[\begin{array}{ll}
W_{G,2}^\top G W_{G,2}  & 0 \\
W_{G,1}^\top G W_{G,2} & 0
\end{array}\right].
    	\end{split}
\end{equation*}
Similarly, we proceed for $\bar{R}$.
\end{proof}

By direct inspection, we can prove the following:

\begin{proposition}
The matrices $\bar{A}_{i}$, $i=0,1,2$, have the structure
\begin{subequations}
\begin{align}
& \bar{A}_{0}=\left[\begin{array}{ll}
\bar{A}_{011} & 0 \\ 
\bar{A}_{021} & 0
\end{array}\right],~~
\bar{A}_{011}=T_{R,2}A_0 W_{G,2},~~
\bar{A}_{021}=T_{R,1}A_0 W_{G,2},\label{19}\\
&\bar{A}_{1}=\left[\begin{array}{ll}
\bar{A}_{111} & \bar{A}_{112} \\
\bar{A}_{121} & \bar{A}_{122}
\end{array}\right],
\begin{array}{ll}
\bar{A}_{111}=T_{R,2}A_1W_{G,2},&
\bar{A}_{112}=T_{R,2}( A_1W_{G,1}+ A_2 W_{G,1} \Lambda_{G,1}), \\
  \bar{A}_{121}=(T_{R,1}A_1 +\Lambda_{R,1}T_{R,1}A_0)W_{G,2}, &
\bar{A}_{122}=T_{R,1}(A_1W_{G,1}+  A_2 W_{G,1} \Lambda_{G,1}),\end{array}\label{21}
\\
    & 
\bar{A}_{2}=\left[\begin{array}{ll}
\bar{A}_{211} & \bar{A}_{212} \\
 0 & 0
\end{array}\right],~~
\bar{A}_{211}=T_{R,2}A_2 W_{G,2},~~
\bar{A}_{021}=T_{R,2}A_2 W_{G,1}.\label{20}
\end{align}
\end{subequations}
\end{proposition}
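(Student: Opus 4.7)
The plan is a direct block-by-block computation of $\bar{A}_i = T_R \widetilde{A}_i W_G$ for $i=0,1,2$, using only two ingredients: the orthonormality relations built into $W_G$ and $T_R$, and the two ``one-sided deflation'' identities established just before the proposition, namely $\widetilde{A}_0 W_{G,1}=0$ and $T_{R,1}\widetilde{A}_2=0$. From $W_G^\top W_G = I_m$ and $T_R T_R^\top = I_m$, I would record the four cross-orthogonality facts $W_{G,1}^\top W_{G,2}=0$, $W_{G,i}^\top W_{G,i}=I$, $T_{R,2}T_{R,1}^\top=0$, $T_{R,i}T_{R,i}^\top=I$, since these are what kill almost every mixed term that will appear.

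For $\bar{A}_0$, I would first compute $\widetilde{A}_0 W_G = [\widetilde{A}_0 W_{G,2},\ \widetilde{A}_0 W_{G,1}]$. The second block-column vanishes by the deflation identity $\widetilde{A}_0 W_{G,1}=0$, while in the first block-column the correction $A_0 W_{G,1}W_{G,1}^\top W_{G,2}$ collapses to zero by cross-orthogonality, leaving simply $A_0 W_{G,2}$. Left-multiplying by $T_R=[T_{R,2}^\top;T_{R,1}^\top]^\top$ then gives exactly \eqref{19}. The argument for $\bar{A}_2$ is the mirror image: using $T_{R,1}\widetilde{A}_2=0$ and $T_{R,2}T_{R,1}^\top=0$ I get $T_R\widetilde{A}_2=[T_{R,2}A_2;0]^\top$, and then right-multiplying by $W_G$ yields \eqref{20}.

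For $\bar{A}_1$ there is nothing conceptually new, but more bookkeeping, so I would treat the four blocks separately. In each of the $(1,1)$, $(1,2)$ and $(2,2)$ entries, the term $T_{R,2}^?\, T_{R,1}^\top\Lambda_{R,1}T_{R,1}\widetilde{A}_0 W_{G,?}$ vanishes either by $T_{R,2}T_{R,1}^\top=0$ or by $\widetilde{A}_0 W_{G,1}=0$, so only the $A_1$ part and the middle shift term $A_2 W_{G,1}\Lambda_{G,1}W_{G,1}^\top$ contribute; the latter survives only in the second block-column because of $W_{G,1}^\top W_{G,2}=0$ and $W_{G,1}^\top W_{G,1}=I$. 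For the $(2,1)$ entry the shift term $A_2 W_{G,1}\Lambda_{G,1}W_{G,1}^\top W_{G,2}$ again vanishes, while $T_{R,1}T_{R,1}^\top=I$ keeps the contribution $\Lambda_{R,1}T_{R,1}\widetilde{A}_0 W_{G,2}$, which simplifies to $\Lambda_{R,1}T_{R,1}A_0 W_{G,2}$ once more by $W_{G,1}^\top W_{G,2}=0$. Assembling the four results reproduces \eqref{21}.

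There is no real obstacle here beyond keeping track of which cross products kill which terms; the proof is essentially bookkeeping. The only place I expect a reader might lose confidence is the $(2,1)$ block of $\bar{A}_1$, where both the shift correction in $\widetilde{A}_1$ and the correction hidden inside $\widetilde{A}_0$ need to be simplified, so I would write that block in slightly more detail than the other three.
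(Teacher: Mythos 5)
Your proposal is correct and is exactly the ``direct inspection'' that the paper leaves to the reader: expand $\bar{A}_i=T_R\widetilde{A}_iW_G$ blockwise and use the cross-orthogonality relations together with $\widetilde{A}_0W_{G,1}=0$ and $T_{R,1}\widetilde{A}_2=0$, with the only slightly delicate block being $\bar{A}_{121}$, which you handle correctly. (Incidentally, the $(1,2)$ block of $\bar{A}_2$ in \eqref{20} should be labelled $\bar{A}_{212}$, not $\bar{A}_{021}$; your computation gives $T_{R,2}A_2W_{G,1}$ for that block, consistent with the formula despite the mislabel.)
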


The matrices $\bar{G}_{11}$ and $\bar{G}_{21}$, and $\bar{R}_{11}$ and $\bar{R}_{12}$, which form the matrices $\bar{G}$ and $\bar{R}$ in \eqref{17} and \eqref{18}, solve a system of matrix equations, as stated by the following:

\begin{theorem}
The matrices $\bar{G}_{11}$ and $\bar{G}_{21}$ of \eqref{17} satisfy
\begin{equation}\label{22}
    	\begin{cases}
    		\bar{A}_{021}+\bar{A}_{121}\bar{G}_{11}+\bar{A}_{122}\bar{G}_{21}=0,\\
    		\bar{A}_{011}+\bar{A}_{111}\bar{G}_{11}+\bar{A}_{112}\bar{G}_{21}+\bar{A}_{211}\bar{G}_{11}^2+\bar{A}_{212}\bar{G}_{21}\bar{G}_{11}=0.
    	\end{cases}
\end{equation}
The matrices 
$\bar{R}_{11}$ and $\bar{R}_{12}$ of \eqref{18} satisfy
\begin{equation}\label{23}
    	\begin{cases}
    		\bar{A}_{212}+\bar{R}_{11}\bar{A}_{112}+\bar{R}_{12}\bar{A}_{122}=0,\\
    		\bar{A}_{211}+\bar{R}_{11}\bar{A}_{111}+\bar{R}_{12}\bar{A}_{121}+\bar{R}_{11}^2\bar{A}_{011}+\bar{R}_{11}\bar{R}_{12}\bar{A}_{021}=0.
    	\end{cases}
\end{equation}    
\end{theorem}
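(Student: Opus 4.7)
The plan is to obtain both systems as the nonzero blocks that appear when we substitute the block forms of $\bar{G}$, $\bar{R}$, $\bar{A}_0$, $\bar{A}_1$, $\bar{A}_2$ from \eqref{17}, \eqref{18}, \eqref{19}, \eqref{21}, \eqref{20} into the two QMEs
\[
\bar{A}_0 + \bar{A}_1 \bar{G} + \bar{A}_2 \bar{G}^2 = 0, \qquad \bar{R}^2 \bar{A}_0 + \bar{R} \bar{A}_1 + \bar{A}_2 = 0,
\]
which we already know hold because $\bar{G}$ and $\bar{R}$ were shown to solve them, being obtained as $\bar{G}=W_G^\top \widetilde{G} W_G$ and $\bar{R}=T_R \widetilde{R} T_R^\top$ from the factorization \eqref{eq:facat}.

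First I would compute the relevant powers. Because the second block column of $\bar{G}$ is zero, $\bar{G}^2$ keeps this structure, namely
\[
\bar{G}^2=\begin{bmatrix} \bar{G}_{11}^2 & 0 \\ \bar{G}_{21}\bar{G}_{11} & 0 \end{bmatrix}.
\]
Similarly, because the second block row of $\bar{R}$ is zero, one gets
\[
\bar{R}^2=\begin{bmatrix} \bar{R}_{11}^2 & \bar{R}_{11}\bar{R}_{12} \\ 0 & 0 \end{bmatrix}.
\]
Then I would form $\bar{A}_1\bar{G}$ and $\bar{A}_2\bar{G}^2$ blockwise. The zero block column of $\bar{G}$ and $\bar{G}^2$ makes the second block column of $\bar{A}_1\bar{G}+\bar{A}_2\bar{G}^2$ vanish identically, so the equation $\bar{A}_0+\bar{A}_1\bar{G}+\bar{A}_2\bar{G}^2=0$ contributes only two nontrivial block-column-one identities: the $(2,1)$-block yields the first equation in \eqref{22} and the $(1,1)$-block yields the second equation in \eqref{22}. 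A symmetric computation with $\bar{R}^2\bar{A}_0+\bar{R}\bar{A}_1+\bar{A}_2=0$: the zero block row of $\bar{R}$ and $\bar{R}^2$ kills the second block row, and the two remaining block-row-one identities ($(1,2)$ and $(1,1)$) produce the two equations in \eqref{23}.

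There is no real obstacle here; the argument is pure block bookkeeping, and the statement is essentially a repackaging of the QMEs $\bar{A}_0+\bar{A}_1\bar{G}+\bar{A}_2\bar{G}^2=0$ and $\bar{R}^2\bar{A}_0+\bar{R}\bar{A}_1+\bar{A}_2=0$ after the block structures in \eqref{17}--\eqref{20} are used. The only care needed is to track which blocks vanish automatically due to the zero columns of $\bar{G}$ (respectively zero rows of $\bar{R}$) and the zero columns of $\bar{A}_0$ (respectively zero rows of $\bar{A}_2$), so that exactly the claimed four equations remain.
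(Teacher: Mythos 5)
Your proposal is correct and follows exactly the same route as the paper: substitute the block structures \eqref{17}--\eqref{20} into $\bar{A}_0+\bar{A}_1\bar{G}+\bar{A}_2\bar{G}^2=0$ and $\bar{A}_2+\bar{R}\bar{A}_1+\bar{R}^2\bar{A}_0=0$ and read off the nonvanishing blocks. Your identification of which block positions yield which equations (the $(2,1)$ and $(1,1)$ blocks for \eqref{22}, the $(1,2)$ and $(1,1)$ blocks for \eqref{23}) matches the paper's computation.
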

\begin{proof}
Since $\bar{G}$ satisfies
\[
\bar{A}_{0}+\bar{A}_{1} \bar{G}+\bar{A}_{2} \bar{G}^2=0,
\]
from \eqref{17} and \eqref{19}-\eqref{20} we can obtain
\[
\left[\begin{array}{ll}
\bar{A}_{011} & 0 \\ 
\bar{A}_{021} & 0
\end{array}\right]+\left[\begin{array}{ll}
\bar{A}_{111} & \bar{A}_{112} \\
\bar{A}_{121} & \bar{A}_{122}
\end{array}\right] \left[\begin{array}{ll}
\bar{G}_{11}  & 0 \\
\bar{G}_{21} & 0
\end{array}\right]+\left[\begin{array}{ll}
\bar{A}_{211} & \bar{A}_{212} \\
 0 & 0
\end{array}\right] \left[\begin{array}{ll}
\bar{G}_{11}^2  & 0 \\
\bar{G}_{21}\bar{G}_{11} & 0
\end{array}\right]=0,
\]
which leads to \eqref{22}.

Similarly, since $\bar{R}$ satisfies
\[
\bar{A}_{2}+\bar{R} \bar{A}_{1}+\bar{R}^2 \bar{A}_{0}=0,
\]
from \eqref{18} and \eqref{19}-\eqref{20} we can obtain
\[
\left[\begin{array}{ll}
\bar{A}_{211} & \bar{A}_{212} \\
 0 & 0
\end{array}\right]+\left[\begin{array}{ll}
\bar{R}_{11} & \bar{R}_{12} \\
0 & 0
\end{array}\right] \left[\begin{array}{ll}
\bar{A}_{111} & \bar{A}_{112} \\
\bar{A}_{121} & \bar{A}_{122}
\end{array}\right]+\left[\begin{array}{ll}
\bar{R}_{11}^2 & \bar{R}_{11}\bar{R}_{12} \\
0 & 0
\end{array}\right]\left[\begin{array}{ll}
\bar{A}_{011} & 0 \\ 
\bar{A}_{021} & 0
\end{array}\right]=0,
\]
which leads to \eqref{23}.
\end{proof}

If the matrix $\bar{A}_{122} \in \mathbb{R}^{q\times q}$ is nonsingular, the systems \eqref{22} and \eqref{23} can be transformed into the QME
\begin{equation}\label{eq:qmeGbar}
B_0+B_1\bar{G}_{11}+B_2\bar{G}_{11}^2=0,
\end{equation}
 with
\begin{equation}\label{eq:Bbar}
    	\begin{cases}
            B_0=\bar{A}_{011}-\bar{A}_{112}\bar{A}_{122}^{-1}\bar{A}_{021},\\
    		B_1=\bar{A}_{111}-\bar{A}_{112}\bar{A}_{122}^{-1}\bar{A}_{121}-\bar{A}_{212}\bar{A}_{122}^{-1}\bar{A}_{021},\\
            B_2=\bar{A}_{211}-\bar{A}_{212}\bar{A}_{122}^{-1}\bar{A}_{121}.
    	\end{cases}
\end{equation}
and $\bar{G}_{21}$ can be recovered from the equation
\begin{equation}\label{G_21bar}
    \bar{G}_{21}=-\bar{A}_{122}^{-1}\left(\bar{A}_{021}+\bar{A}_{121}\bar{G}_{11}\right).
\end{equation}
In particular, since from \eqref{17} the eigenvalues of $\bar{G}_{11}$ are the eigenvalues of $\bar{G}$, then $\bar{G}_{11}$ is the solution of the QME \eqref{eq:qmeGbar}
with eigenvalues $\mu_1,\ldots,\mu_\ell$.

Similarly, we have
\begin{equation}\label{eq:qmeRbar}
\bar{R}_{11}^2B_0+\bar{R}_{11}B_1+B_2=0,
\end{equation}
and
\begin{equation}\label{R_12bar}
    	\bar{R}_{12}=-\left(\bar{A}_{212}+\bar{R}_{11}\bar{A}_{112}\right)\bar{A}_{122}^{-1},            
\end{equation}
where $\bar{R}_{11}$ is the solution to the QME \eqref{eq:qmeRbar} with eigenvalues $1/\mu_1,\ldots,1/\mu_\ell$.

The following result shows that the matrices $G$ and $R$ can be recovered from $\bar{G}_{11}$ and $\bar{G}_{21}$, and $\bar{R}_{11}$ and $\bar{R}_{12}$, respectively.

\begin{proposition}
\begin{equation}\label{G_final}
G=W_{G,2}\bar{G}_{11}W_{G,2}^\top+W_{G,1}\bar{G}_{21}W_{G,2}^\top+W_{G,1}\Lambda_{G,1}W_{G,1}^\top 
\end{equation}
and
\begin{equation}\label{R_final}
R=T_{R,2}^\top\bar{R}_{11}T_{R,2}+T_{R,2}^\top\bar{R}_{12}T_{R,1}+T_{R,1}^\top\Lambda_{R,1}T_{R,1}. 
\end{equation}    
\end{proposition}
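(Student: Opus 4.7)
The plan is a direct verification using the definitions and the orthogonality of $W_G$ and $T_R$. The key observation is that the block decompositions $W_G=[W_{G,2}\ W_{G,1}]$ with $W_G^\top W_G = I_m$ (and similarly for $T_R$) yield both $W_{G,2}^\top W_{G,2}=I_\ell$, $W_{G,1}^\top W_{G,1}=I_{m-\ell}$, and the mutual orthogonalities $W_{G,2}^\top W_{G,1}=0$. In particular, $W_G$ is square orthogonal, so $W_G W_G^\top=I_m$, which also gives $W_{G,2}W_{G,2}^\top + W_{G,1}W_{G,1}^\top = I_m$; the analogous identities hold for $T_R$.

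First I would invert the relation $\bar{G}=W_G^\top\widetilde{G}W_G$ using this orthogonality to obtain $\widetilde{G} = W_G \bar{G} W_G^\top$, and then combine it with the definition $\widetilde{G}=G-W_{G,1}\Lambda_{G,1}W_{G,1}^\top$ from Section~\ref{sec:defl} to write
\[
G = W_G \bar{G} W_G^\top + W_{G,1}\Lambda_{G,1}W_{G,1}^\top.
\]
Next, I would substitute the block form \eqref{17} of $\bar{G}$ and expand: since the second block column of $\bar{G}$ is zero,
\[
W_G\bar{G}W_G^\top = \begin{bmatrix} W_{G,2} & W_{G,1}\end{bmatrix}\begin{bmatrix} \bar{G}_{11} & 0 \\ \bar{G}_{21} & 0\end{bmatrix}\begin{bmatrix} W_{G,2}^\top \\ W_{G,1}^\top \end{bmatrix} = W_{G,2}\bar{G}_{11}W_{G,2}^\top + W_{G,1}\bar{G}_{21}W_{G,2}^\top,
\]
which on adding $W_{G,1}\Lambda_{G,1}W_{G,1}^\top$ yields \eqref{G_final}.

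For \eqref{R_final} I would repeat the same argument on the right-hand side: from $\bar{R}=T_R\widetilde{R}T_R^\top$ and the orthogonality $T_R^\top T_R=I_m$ I get $\widetilde{R}=T_R^\top \bar{R} T_R$, and combined with $\widetilde{R}=R-T_{R,1}^\top\Lambda_{R,1}T_{R,1}$ this gives $R=T_R^\top\bar{R}T_R + T_{R,1}^\top\Lambda_{R,1}T_{R,1}$. Using the block form \eqref{18} of $\bar{R}$, whose second block row vanishes, the expansion of $T_R^\top\bar{R}T_R$ produces exactly $T_{R,2}^\top\bar{R}_{11}T_{R,2} + T_{R,2}^\top\bar{R}_{12}T_{R,1}$, giving \eqref{R_final}.

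There is no real obstacle: the only subtle point is recognizing that $W_G$ and $T_R$ are square orthogonal matrices (not just matrices with orthonormal columns/rows taken in isolation), which is what allows inversion of the similarity $\bar{G}=W_G^\top\widetilde{G}W_G$. Once this is noted, the proof is a two-line block-matrix computation in each case.
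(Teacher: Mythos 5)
Your proposal is correct and follows essentially the same route as the paper: both proofs exploit that $W_G$ and $T_R$ are square orthogonal matrices so that conjugation by them can be inverted, and then expand the resulting block products using the structure \eqref{17} and \eqref{18}. The only cosmetic difference is that the paper computes $W_G^\top G W_G$ directly and identifies its blocks via the invariance relation $GW_{G,1}=W_{G,1}\Lambda_{G,1}$, whereas you compose the already-established relation $\bar{G}=W_G^\top\widetilde{G}W_G$ with the definition of $\widetilde{G}$; the computations are identical in substance.
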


\begin{proof}
From \eqref{eq:gbar} and \eqref{17}, we obtain
\[
    	\begin{split}
\widetilde{G}&=W_G\bar{G}W_G^\top\\
&=[W_{G,2} ~ W_{G,1}]\left[\begin{array}{ll}
\bar{G}_{11}  & 0 \\
\bar{G}_{21} & 0
\end{array}\right]\left[\begin{array}{ll}
W_{G,2}^\top \\
W_{G,1}^\top
\end{array}\right]\\
    		&=W_{G,2}\bar{G}_{11}W_{G,2}^\top+W_{G,1}\bar{G}_{21}W_{G,2}^\top,
    	\end{split}
\]
Subsequently, \eqref{G_final} is derived from
$G=\widetilde{G}+W_{G,1}{\Lambda}_{G,1} W_{G,1}^\top$.

Similarly, the expression for \eqref{R_final} can also be obtained.
\end{proof}

If $\Lambda_{G,1}$ is nonsingular, from \eqref{21} and the first formula in \eqref{10}, we can derive
\[
\bar{A}_{122}=-T_{R,1} A_0 W_{G,1}\Lambda_{G,1}^{-1},
\]
so that the condition for the non-singularity of the matrix $\bar{A}_{122}$ is
that the matrix
    		$T_{R,1} A_0 W_{G,1}$
            is nonsingular.
             It is worth noting that $\Lambda_{G,1}$ may be singular. In fact, $\Lambda_{G,1}$ is nonsingular if and only if $\lambda_1 \neq 0$.

\section{Convergence properties of CR in the case of more than one eigenvalue on the unit circle}\label{sec:convcr}
In this section, we derive some convergence properties of CR under the assumptions stated in Section~\ref{sec:ass}. In this case, Theorem~\ref{thm:crconv_orig} is not applicable, and CR generally does not converge if $\ell>1$.
However, we will show that CR converges on a suitable subspace, and such a subspace can be used to 
derive the matrices $W_{G,1}$, $\Lambda_{G,1}$, $T_{R,1}$, and $\Lambda_{R,1}$ used in Section~\ref{sec:defl}.

\begin{proposition}\label{prop:A0KW}
Let $W_{G,1}$ and $T_{R,1}$ be matrices such that  \eqref{G_invariant} and \eqref{R_invariant} hold.
Assume that CR can be carried out without a breakdown and that the sequences \eqref{eq:crgen} generated by CR are bounded. Then
\[
    \limsup_{k\to\infty} {\| A_0^{(k)}W_{G,1}\|^{2^{-k}}}\le |\lambda_{m-\ell}|, ~~~ \limsup_{k\to\infty} {\| T_{R,1} A_2^{(k)}\|^{2^{-k}}}\le 1/|\lambda_{m+\ell+1}|.
\]
Moreover, $\lim_{k\to\infty}\nullity(A_2^{(k)})=\lim_{k\to\infty}\nullity(A_0^{(k)})=m-\ell$, where  $\nullity(V)$ is the dimension of the null space of the matrix $V$.
\end{proposition}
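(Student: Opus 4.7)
The plan is to exploit the exact identities \eqref{eq:crqme} together with the invariant-subspace relations \eqref{G_invariant} and \eqref{R_invariant}. For the first bound I would start from the identity $A_0^{(k)} + A_1^{(k)} G^{2^k} + A_2^{(k)} G^{2^{k+1}} = 0$ (the first line of \eqref{eq:crqme}) and right-multiply it by $W_{G,1}$. Iterating $G W_{G,1}=W_{G,1}\Lambda_{G,1}$ yields $G^{2^k}W_{G,1}=W_{G,1}\Lambda_{G,1}^{2^k}$ and $G^{2^{k+1}}W_{G,1}=W_{G,1}\Lambda_{G,1}^{2^{k+1}}$, so the identity rearranges to
\[
A_0^{(k)} W_{G,1} = -A_1^{(k)} W_{G,1}\Lambda_{G,1}^{2^k} - A_2^{(k)} W_{G,1}\Lambda_{G,1}^{2^{k+1}}.
\]
Passing to norms, using the assumed boundedness of $\{A_1^{(k)}\}$ and $\{A_2^{(k)}\}$, the orthonormality of $W_{G,1}$, and Gelfand's identity $\|\Lambda_{G,1}^{2^k}\|^{1/2^k}\to\rho(\Lambda_{G,1})=|\lambda_{m-\ell}|<1$, makes the first term dominant and yields the claimed estimate.

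The second bound is obtained by the symmetric computation. From the reversed identity $R^{2^{k+1}} A_0^{(k)} + R^{2^k} A_1^{(k)} + A_2^{(k)} = 0$, left-multiplying by $T_{R,1}$ and using $T_{R,1}R^{2^k}=\Lambda_{R,1}^{2^k}T_{R,1}$ gives
\[
T_{R,1} A_2^{(k)} = -\Lambda_{R,1}^{2^{k+1}} T_{R,1} A_0^{(k)} - \Lambda_{R,1}^{2^k} T_{R,1} A_1^{(k)},
\]
and invoking $\rho(\Lambda_{R,1})=1/|\lambda_{m+\ell+1}|<1$ together with Gelfand's formula completes the estimate.

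For the nullity claim I would combine a Courant--Fischer upper bound on the rank with a structural lower bound. The first step uses the bound just established: because $\|A_0^{(k)} W_{G,1}\|\to 0$ and $W_{G,1}$ has $m-\ell$ orthonormal columns, the min--max characterization of singular values gives $\sigma_{\ell+1}(A_0^{(k)})\to 0$, so asymptotically at least $m-\ell$ singular values of $A_0^{(k)}$ vanish. For the matching lower bound on the rank, the no-breakdown assumption together with the two solution relations in \eqref{eq:crqme} yields, by the argument behind \eqref{eq:facat}, the symbol-level factorization $A^{(k)}(z)=(I_m-zR^{2^k})W^{(k)}(zI_m-G^{2^k})$ with $W^{(k)}$ nonsingular, whence $A_0^{(k)}=-W^{(k)}G^{2^k}$; since $G^{2^k}$ retains the $\ell$ unit-modulus eigenvalues inherited from $\mu_1,\ldots,\mu_\ell$, its rank is at least $\ell$, and hence so is $\mathrm{rank}(A_0^{(k)})$. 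Combining the two inequalities gives $\nullity(A_0^{(k)})\to m-\ell$. The case of $A_2^{(k)}$ is parallel, reading off $A_2^{(k)}=-R^{2^k}W^{(k)}$ from the same factorization.

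The main obstacle is precisely this nullity part: the Courant--Fischer step only produces numerical decay of $m-\ell$ singular values, not an algebraic vanishing, so upgrading to the integer-valued nullity requires the exact factorization $A^{(k)}(z)=(I_m-zR^{2^k})W^{(k)}(zI_m-G^{2^k})$. Establishing this rigorously under boundedness alone means verifying that $W^{(k)}$ inherits nonsingularity along the entire iteration from the uniqueness assumptions of Section~\ref{sec:ass}, and carefully tracking the Jordan structure of $G^{2^k}$ so that exactly $\ell$ of its eigenvalues (the images of $\mu_1,\ldots,\mu_\ell$) remain nonzero while the $m-\ell$ stable ones contribute the null space. The two $\limsup$ estimates then sit naturally alongside this factorization, identifying the $(m-\ell)$-dimensional approximate null spaces of $A_0^{(k)}$ and $A_2^{(k)}$ with the ranges of $W_{G,1}$ and $T_{R,1}^\top$.
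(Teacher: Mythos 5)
Your derivation of the two $\limsup$ bounds is exactly the paper's argument: right-multiply the first identity of \eqref{eq:crqme} by $W_{G,1}$ (resp.\ left-multiply the second by $T_{R,1}$), use $G^{2^k}W_{G,1}=W_{G,1}\Lambda_{G,1}^{2^k}$ and $T_{R,1}R^{2^k}=\Lambda_{R,1}^{2^k}T_{R,1}$, and invoke boundedness of the CR sequences together with $\rho(\Lambda_{G,1})=|\lambda_{m-\ell}|$ and $\rho(\Lambda_{R,1})=1/|\lambda_{m+\ell+1}|$. That part is correct and identical in route.

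For the nullity claim you take a genuinely different path, and it has a gap that you yourself half-identify. Your plan is: (i) $A_0^{(k)}W_{G,1}\to 0$ forces at least $m-\ell$ singular values of $A_0^{(k)}$ to decay; (ii) a factorization $A^{(k)}(z)=(I_m-zR^{2^k})W^{(k)}(zI_m-G^{2^k})$ gives $A_0^{(k)}=-W^{(k)}G^{2^k}$ and hence $\operatorname{rank}(A_0^{(k)})\ge\ell$. But step (ii) does not close the argument: a rank lower bound of $\ell$ only says that $\ell$ singular values are nonzero at each finite $k$; it does not bound $\sigma_\ell(A_0^{(k)})$ away from zero as $k\to\infty$, so it cannot certify that \emph{exactly} $m-\ell$ singular values vanish asymptotically, which is what the (numerical) nullity statement requires. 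Moreover, establishing nonsingularity of $W^{(k)}$ along the whole iteration already requires the spectral information you are trying to avoid. The paper's proof bypasses the factorization entirely: it uses the standard CR property that the roots of $a_k(z)=\det A^{(k)}(z)$ are $\lambda_i^{2^k}$, $i=1,\dots,2m$; by the separation \eqref{eq:roots_separation} exactly $m-\ell$ of these converge to $0$ while the remaining $m+\ell$ stay on or outside the unit circle, so the asymptotic nullity of $A_0^{(k)}=A^{(k)}(0)$ is capped at $m-\ell$, and the matching lower bound comes from $A_0^{(k)}W_{G,1}\to 0$ as in step (i). If you want to repair your version, you should replace the rank argument by this root count (or equivalently show that $\ell$ roots of $a_k$ remain on the unit circle, which is what keeps $\sigma_\ell(A_0^{(k)})$ from collapsing); the factorization with $W^{(k)}$ is not needed.
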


\begin{proof}
By multiplying the first equation in \eqref{eq:crqme} to the right by $W_{G,1}$, since
\[
G^{2^k} W_{G,1}=W_{G,1}\Lambda_{G,1}^{2^k},
\]
we obtain
\[
A_{0}^{(k)} W_{G,1}+A_{1}^{(k)}W_{G,1}{\Lambda}_{G,1}^{2^k}+A_{2}^{(k)} W_{G,1}{\Lambda}_{G,1}^{2^{k+1}}=0.
\]
Since the sequences $\{ A_1^{(k)} \}_{k=1}^{\infty}$ and $\{ A_2^{(k)}\}_{k=1}^{\infty}$ are bounded and $\rho(\Lambda_{G,1})=|\lambda_{m-\ell}|<1$, then we conclude that
\[\limsup_{k\to\infty} {\| A_0^{(k)}W_{G,1}\|^{2^{-k}}}\le |\lambda_{m-\ell}|, \]
and
\[
\lim\limits _{k \rightarrow \infty}A_{0}^{(k)} W_{G,1}=0.
\]

Therefore, in particular $\limsup_{k\to\infty}{\nullity(A_0^{(k)})}\ge m-\ell$.
From the properties of the CR algorithm \cite{BM:CRsurvey}, the roots of 
\[a_k(z)=\det(A_0^{(k)}+z A_1^{(k)}+z^2 A_2^{(k)})\] are 
\[\lambda_i^{2^k}, \quad \text{for} \quad k=1,\ldots,2m.\]
From the  condition \eqref{eq:roots_separation}, as $k \rightarrow \infty$,  $a_k(z)$ has exactly $m-\ell$ zeros that converge to 0, which implies that $\lim_{k\to\infty}{\nullity(A_0^{(k)})}= m-\ell$.
The properties for the sequence $\{A_2^{(k)}\}_{k=1}^{\infty}$ can be proved similarly, by multiplying the second equation in \eqref{eq:crqme} to the left by $T_{R,1}$.
\end{proof}

Let $$A_{0}^{(k)}=U_{0}^{(k)}\Sigma_{0}^{(k)}(V_{0}^{(k)})^\top$$ be the SVD of $A_{0}^{(k)}$, where $U_0^{(k)}$
and $V_{0}^{(k)}$ are $m\times m$ orthogonal matrices, and
the diagonal entries of $\Sigma_{0}^{(k)}$ are in non-increasing order, partition
$V_0^{(k)}=[V_{0,1}^{(k)} ~ V_{0,2}^{(k)}]$ with  $V_{0,1}^{(k)}\in\mathbb{R}^{m\times \ell}$, $V_{0,2}^{(k)}\in\mathbb{R}^{m\times (m-\ell)}$.
Then, from Proposition \ref{prop:A0KW}, we can readily conclude that 
\[\operatorname{rank}(W_{G,1})=\lim\limits _{k \rightarrow \infty}\operatorname{rank}(V_{0,2}^{(k)}).\]
In particular, there exists a sequence of matrices $\{\widetilde{\Lambda}_{G,1}^{(k)}\}_{k=1}^{\infty}\in \mathbb{R}^{(m-l)\times (m-l)}$ 
such that
\begin{equation}\label{10-1}
\lim\limits _{k \rightarrow \infty}\left[GV_{0,2}^{(k)}-V_{0,2}^{(k)}\widetilde{\Lambda}_{G,1}^{(k)}\right]=0
\end{equation}
and the eigenvalues of $\widetilde{\Lambda}_{G,1}^{(k)}$, counting multiplicities, converge to the eigenvalues of ${\Lambda}_{G,1}$.
Therefore, for $k$ sufficiently large, the span of the columns of
$V_{0,2}^{(k)}$ approximates the invariant subspace of $G$ corresponding to the eigenvalues in the open unit disk, so that
$$
\lim\limits _{k \rightarrow \infty}\left[A_0V_{0,2}^{(k)}+A_1V_{0,2}^{(k)}\widetilde{\Lambda}_{G,1}^{(k)}+A_2V_{0,2}^{(k)}(\widetilde{\Lambda}_{G,1}^{(k)})^2\right]=0.
$$
In order to recover $\widetilde{\Lambda}_{G,1}^{(k)}$ for $k$ sufficiently large, multiply the last equation in \eqref{eq:crqme} by $V_{0,2}^{(k)}$ and obtain
\begin{equation}\label{eq:a0k}
A_{0}V_{0,2}^{(k)}+\widehat{A}_{1}^{(k)} G V_{0,2}^{(k)} + A_{2}^{(k)} G^{2^k+1}V_{0,2}^{(k)}=0.
\end{equation}
From \eqref{10-1}
and from the property $\lim_{k\to\infty}G^{2^k+1}V_{0,2}^{(k)}=\lim_{k\to\infty}V_{0,2}^{(k)}\left(\widetilde{\Lambda}_{G,1}^{(k)}\right)^{2^k+1}= 0$, 
by multiplying \eqref{eq:a0k} by $(V_{0,2}^{(k)})^\top$,
we deduce that
\[
\lim_{k\to\infty}\left(\widetilde{\Lambda}_{G,1}^{(k)} + (V_{0,2}^{(k)})^\top (\widehat{A}_{1}^{(k)})^{-1} A_0 V_{0,2}^{(k)}\right)=0,
\]
therefore $\widetilde{\Lambda}_{G,1}^{(k)}$ can be approximated by $-(V_{0,2}^{(k)})^\top (\widehat{A}_{1}^{(k)})^{-1} A_0 V_{0,2}^{(k)}$, for $k$ sufficiently large.
In other words, the invariant subspace corresponding to the eigenvalues in the open unit disk of the sought solution $G$ is approximated by the span generated by the columns of $V_{0,2}^{(k)}$. Moreover, a matrix $S_G$ such that
$G V_{0,2}^{(k)}\approx V_{0,2}^{(k)} S_{G}$ can be obtained as
\[
S_{G} = -(V_{0,2}^{(k)})^\top (\widehat{A}_{1}^{(k)})^{-1} A_0 V_{0,2}^{(k)}.
\]
Similarly, if
$$A_{2}^{(k)}=U_{2}^{(k)}\Sigma_{2}^{(k)}(V_{2}^{(k)})^\top$$ is the SVD of $A_{2}^{(k)}$, then partition $U_{2}^{(k)}$ as 
$U_{2}^{(k)}=\begin{bmatrix}
 U_{2,1}^{(k)} \\ U_{2,2}^{(k)}
\end{bmatrix}$, 
where $U_{2,1}^{(k)}\in\mathbb{R}^{\ell\times m}$, 
$U_{2,2}^{(k)}\in\mathbb{R}^{(m-\ell)\times m}$.
As for the matrix $G$, there exists a sequence of matrices $\{\widetilde{\Lambda}_{R,1}^{(k)}\}_{k=1}^{\infty}\in \mathbb{R}^{(m-l)\times (m-l)}$ 
such that
\[
\lim\limits _{k \rightarrow \infty}\left[U_{2,2}^{(k)} R-\widetilde{\Lambda}_{R,1}^{(k)} U_{2,2}^{(k)} \right]=0
\]
and the eigenvalues of $\widetilde{\Lambda}_{R,1}^{(k)}$, counting multiplicities, converge to the eigenvalues of ${\Lambda}_{R,1}$.
Therefore, the left invariant subspace corresponding to the eigenvalues in the open unit disk of the sought solution $R$ is approximated by the span generated by the rows of $U_{2,2}^{(k)}$. Moreover, a matrix $S_R$ such that
$ U_{2,2}^{(k)} R\approx S_R U_{2,2}^{(k)} $ can be obtained as
\[
S_{R} = -U_{2,2}^{(k)} A_2(\widehat{A}_{1}^{(k)})^{-1}  (U_{2,2}^{(k)})^\top.
\]

In particular, CR can be applied to compute left and right invariant subspace of $G$ and $R$, respectively, associated with the eigenvalues in the open unit disk.
The resulting procedure  is described by Algorithm~\ref{alg:isg}.

\begin{algorithm}
	\caption{Compute the right invariant subspace of $G$ and the left invariant subspace of $R$, associated with the eigenvalues in the open unit disk.}
	\label{alg:isg}
	\begin{algorithmic}
        \State \textbf{Input:} The $m\times m$ matrix coefficients $A_0$, $A_1$, and $A_2$ of the QME \eqref{eq:qme}; the number $\ell$, $1\le\ell\le m$, of eigenvalues of modulus 1 of the sought solutions $G$ and $R$; a tolerance $\epsilon>0$.
        \State \textbf{Output:} Orthogonal  matrices $W_G$ and $T_R$, and 
        $(m-\ell)\times (m-\ell)$ matrices $\Lambda_{G,1}$ and $\Lambda_{R,1}$, such that 
        $GW_{G,1}= W_{G,1} \Lambda_{G,1}$ and $T_{R,1} R= \Lambda_{R,1} T_{R,1} $, with
        $W_{G,1}\in \mathbb{R}^{m\times (m-\ell)}$ formed by the last $m-\ell$ columns of $W_G$,  $T_{R,1}\in \mathbb{R}^{(m-\ell)\times m}$ formed by the last $m-\ell$ rows of $T_R$, and $\sigma(\Lambda_{G,1})=\{\lambda_1,\ldots,\lambda_{m-\ell}\}$, $\sigma(\Lambda_{R,1})=\{1/\lambda_{m+\ell},\ldots,1/\lambda_{2m}\}$.
        \State \textbf{Computation:}
        \State Set $\widehat{A}_{1}^{(0)}=A_{1}, A_i^{(0)}=A_i$, for $i=0,1,2$.
        \For{$k=0,1,\ldots$}
        \State Compute  $\widehat{A}^{(k+1)}$, ${A_i^{(k+1)}}$, for $i=0,1,2$, by means of \eqref{eq:crgen}.
        \State Compute the SVD of $A_{0}^{(k+1)}$ and $A_{2}^{(k+1)}$, $A_{0}^{(k+1)}=U_{0}^{(k+1)}\Sigma_{0}^{(k+1)}(V_{0}^{(k+1)})^\top$ and {$A_{2}^{(k+1)}=U_{2}^{(k+1)}\Sigma_{2}^{(k+1)}(V_{2}^{(k+1)})^\top$}, respectively, and  let $\sigma_1^{(j)}\ge \cdots \ge \sigma_m^{(j)}$, $j=0,2$, be the corresponding singular values.
        \If{$\sigma^{(j)}_{\ell+1}/\sigma^{(j)}_{\ell}<\epsilon$, for $j=0,2$,}
        \State\Return $W_{G}=V_{0}^{(k+1)}$, $T_{R}=U_{2}^{(k+1)}$,  $\Lambda_{G,1}=-W_{G,1}^\top (\widehat{A}_{1}^{(k+1)})^{-1} A_0 W_{G,1}$, $\Lambda_{R,1} = -T_{R,1} A_2 (\widehat{A}_{1}^{(k)})^{-1}  T_{R,1}^\top$.
        \EndIf
        \EndFor
	\end{algorithmic}
\end{algorithm}

\section{Combination of CR, block shift-and-deflate, and QZ algorithm}\label{CR and block shift-and-deflate}
In this section, we use the properties shown in Sections~\ref{sec:defl} and \ref{sec:convcr} to derive an algorithm for computing the solution $G$, by separating the invariant subspace corresponding to the  
eigenvalues inside and on the unit circle. 
Once $G$ is computed, we may recover $R$ from \eqref{eq:GRrelation}.

More specifically, once the matrices $W_G$, $\Lambda_G$, $T_R$, and $\Lambda_R$ are computed by means of Algorithm~\ref{alg:isg}, 
we construct the matrices $\bar{A}_i$, $i=0,1,2$, be means of formulas \eqref{19}, \eqref{21}, and \eqref{20}. Then we solve the $\ell\times\ell$ quadratic matrix equations \eqref{eq:qmeGbar} and \eqref{eq:qmeRbar}. These latter equations cannot be solved by CR, since the corresponding matrix polynomials have all the eigenvalues of modulus one and CR does not generally converge.

An alternative approach  can be formulated in terms of the eigensystems of an associated generalized eigenvalue problem. Higham and Kim established the following lemma \cite{Higham2000}, which characterizes the solutions of 
\begin{equation}\label{eq:qmeB}
B_0+B_1X+B_2 X^2=0.    
\end{equation}

\begin{lemma}
Define

$$
M=\left[\begin{array}{cc}
0 & I \\
-B_0 & -B_1
\end{array}\right], \quad N=\left[\begin{array}{cc}
I & 0 \\
0 & B_2
\end{array}\right],
$$ 
then $X$ is a solution of \eqref{eq:qmeB} if and only if

$$
M\left[\begin{array}{l}
I \\
X
\end{array}\right]=N\left[\begin{array}{l}
I \\
X
\end{array}\right] X.
$$
\end{lemma}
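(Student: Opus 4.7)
The plan is to verify the equivalence by direct block computation, showing that the matrix identity on the right reduces, block by block, to the QME on the left. First I would expand the left-hand side $M\begin{bmatrix} I \\ X \end{bmatrix}$ by block multiplication, which immediately yields the block column $\begin{bmatrix} X \\ -B_0 - B_1 X \end{bmatrix}$. Next I would compute $N\begin{bmatrix} I \\ X \end{bmatrix}$ to get $\begin{bmatrix} I \\ B_2 X \end{bmatrix}$, and then right-multiply by $X$ to obtain $\begin{bmatrix} X \\ B_2 X^2 \end{bmatrix}$.

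Comparing the two sides block by block, the top blocks give the tautology $X = X$, so they contribute no information, while the bottom blocks give $-B_0 - B_1 X = B_2 X^2$, which is exactly $B_0 + B_1 X + B_2 X^2 = 0$. Since each implication is just a rearrangement of the same identity, both directions of the ``if and only if'' follow simultaneously from this single calculation.

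There is no genuine obstacle: the lemma is a restatement of the first companion linearization of the quadratic matrix polynomial $B(z) = B_0 + z B_1 + z^2 B_2$, and its verification is purely mechanical. The only care required is in handling the non-commutativity, namely respecting the order of the final right-multiplication by $X$ on the right-hand side so that the $(2,1)$ block comes out as $B_2 X \cdot X = B_2 X^2$ rather than any reassociated expression, and checking that the block dimensions are compatible throughout.
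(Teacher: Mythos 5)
Your verification is correct: the block multiplication gives $M\left[\begin{smallmatrix} I \\ X \end{smallmatrix}\right]=\left[\begin{smallmatrix} X \\ -B_0-B_1X \end{smallmatrix}\right]$ and $N\left[\begin{smallmatrix} I \\ X \end{smallmatrix}\right]X=\left[\begin{smallmatrix} X \\ B_2X^2 \end{smallmatrix}\right]$, and since the top blocks agree identically, the identity holds precisely when $B_0+B_1X+B_2X^2=0$, which yields both directions at once. The paper does not reproduce a proof --- it quotes the lemma from Higham and Kim --- but your direct companion-linearization computation is exactly the standard argument behind that cited result, so there is nothing to add.
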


Under our assumptions, the matrix polynomial $B(z)=B_0+B_1 z + B_2 z^2$ associated with \eqref{eq:qmeGbar} has eigenvalues $\mu_1,\ldots,\mu_\ell$, each with multiplicity 2, and $\bar{G}_{11}$ is the only solution to \eqref{eq:qmeGbar} with eigenvalues $\mu_1,\ldots,\mu_\ell$. To compute $\bar{G}_{11}$, by borrowing the idea from \cite{Higham2000}, we employ the generalized Schur decomposition, which can be computed via the QZ algorithm \cite[Chapter 7]{gvl}, and derive the following Algorithm~\ref{alg:QZ}. Similarly, $\bar{R}_{11}$ is the only solution to \eqref{eq:qmeRbar} with eigenvalues $1/\mu_1,\ldots,1/\mu_\ell$. Analogous to \eqref{eq:GRrelation}, $\bar{R}_{11}$ can be computed from
\begin{equation}\label{eq:barGRrelation}
    \bar{R}_{11} = -B_2(B_2\bar{G}_{11} + B_1)^{-1}. 
\end{equation}


\begin{algorithm}
	\caption{Compute the solution $\bar{G}_{11}$ of the QME \eqref{eq:qmeB} by means of the QZ algorithm}
	\label{alg:QZ}
	\begin{algorithmic}
        \State \textbf{Input:} the $\ell\times \ell$ matrix coefficients $B_0$, $B_1$, and $B_2$ of the QME \eqref{eq:qmeB}.
        \State \textbf{Output:} the solution $\bar{G}_{11}$ of the QME \eqref{eq:qmeB} with eigenvalues $\mu_1,\ldots,\mu_\ell$.
        \State \textbf{Computation:}
        \State Compute the generalized Schur decomposition 
        $$
Q^* M Z=T, \quad Q^* N Z=S
$$
with $Q$ and $Z$ unitary and $T$ and $S$ upper triangular, and where all matrices are partitioned as block $2 \times 2$ matrices with $\ell \times \ell$ blocks.
        \State Compute the solution $$\bar{G}_{11}=Z_{21} Z_{11}^{-1}=Q_{11} T_{11} S_{11}^{-1} Q_{11}^{-1}.$$
	\end{algorithmic}
\end{algorithm}

Following the above discussion, the resulting procedure for solving the QMEs \eqref{eq:qme} and \eqref{eq:rqme} is summarized in Algorithm~\ref{alg:gfull}. The proposed algorithm is termed the Block-Shifted Cyclic Reduction algorithm, abbreviated as \texttt{BS-CR}.

\begin{algorithm}
	\caption{Compute the solution $G$ and  $R$ of the QMEs \eqref{eq:qme} and \eqref{eq:rqme}, respectively, by means of the \texttt{BS-CR} algorithm}
	\label{alg:gfull}
	\begin{algorithmic}
        \State \textbf{Input:} The $m\times m$ matrix coefficients $A_0$, $A_1$, and $A_2$ of the QME \eqref{eq:qme}; the number $\ell$, $1\le\ell\le m$, of eigenvalues of modulus 1 of the sought solutions $G$ and $R$; a tolerance $\epsilon>0$.
        \State \textbf{Output:} the solution $G$ and  $R$ of the QME \eqref{eq:qme} and \eqref{eq:rqme}, respectively.
        \State \textbf{Computation:}
        \State Compute the matrices $W_G$, $\Lambda_{G,1}$, $T_R$, and $\Lambda_{R,1}$, by means of Algorithm~\ref{alg:isg}.
        \State Compute $B_i$, $i=0,1,2$, by means of \eqref{eq:Bbar}.
        \State Solve the $\ell\times\ell$ quadratic matrix equation \eqref{eq:qmeGbar} by means of the Algorithm~\ref{alg:QZ} and obtain $\bar{G}_{11}$.
        \State Compute $\bar{R}_{11}$ by applying \eqref{eq:barGRrelation}.
        \If{$\bar{A}_{122}$ in \eqref{21} is nonsingular}
        \State Compute $\bar{G}_{21}$ and $\bar{R}_{12}$ 
        by applying \eqref{G_21bar} and \eqref{R_12bar}, respectively.
        \EndIf
        \State Compute $G$ and $R$, 
        by means of \eqref{G_final} and \eqref{R_final}, respectively.
	\end{algorithmic}
\end{algorithm}

\section{Numerical results}\label{Numerical results}
We test the proposed algorithm on two null recurrent QBD problems (Examples 1 and 2), where the number $\ell$ of double eigenvalues of $A(z)$ on the unit circle is greater than 1. 
In particular, the coefficients $A_0$, $A_1$, and $A_2$ are
\[
A_0=-E_0,~~A_1=I-E_1,~~A_2=-E_2,
\]
where $E_i \ge 0$, for $i=0,1,2$, and $(E_0+E_1+E_2)\mathbf{1}=\mathbf{1}$. Moreover, we test the proposed algorithm on an artificial example, not coming from stochastic processes, to verify its robustness (Example 3).

We compare the proposed Algorithm~\ref{alg:gfull}, denoted by \texttt{BS-CR}, with standard CR, denoted by \texttt{CR} and described in \cite[Section 7.5]{BLM05}, the Shifted CR, denoted by \texttt{S-CR} and proposed in \cite{HMR}, and the $U$-based fixed point iteration \texttt{FPI} with starting approximation a stochastic matrix (see \cite[Chapter 6]{BLM05}).

The algorithm \texttt{BS-CR} has been implemented in Matlab.
For \texttt{CR}, \texttt{S-CR}, and \texttt{FPI}, we used the Matlab implementation provided in the package SMCSolver \cite{smcsolver}.

The experiments have been performed with Matlab R2023a, on a Laptop Intel(R) Core(TM) i7  3.00GHz.
For each algorithm, we report the iteration counts, the CPU time and the residual error
defined as 
$$\textbf{Residual}=\|A_0+(A_1 + A_2 X)X \|_\infty,$$ 
where $X$ is the computed approximation of $G$. The residual for the computed approximation for $R$ is not reported since in all the experiments it is almost the same as the residual for the approximation of $G$. 

\subsection{Example 1}
This test is a special case of the example in Section 4.4 of the book \cite{BLM05}. The coefficients $E_i$, for $i=0,1,2$, are
\[
E_0=\begin{bmatrix}
     0 & 0 & 0 & \frac14\\
   \frac{33}{160} & 0 & 0 &0  \\
   \frac{1}{4} & 0 & 0 & 0 \\
   0 & \frac14 & 0& 0 \\
\end{bmatrix},~~
E_1=\begin{bmatrix}
    0 & 0 & 0 & 0 \\
    0 & 0 & \frac34 & 0 \\
    0 & \frac34 & 0 & 0\\
    0 & 0 & 0 & 0 \\
\end{bmatrix},~~E_2=\begin{bmatrix}
     0 & \frac34 & 0 & 0 \\
     0 & 0 & 0 & \frac{7}{160} \\
   0 & 0 & 0 & 0 \\
   \frac34 & 0 & 0 & 0 \\
\end{bmatrix}.
\]
The eigenvalues of $A(z)$ are $\lambda_1=0$, $\mu_j=\cos \frac{2\pi(j-1)}{3}+ i \sin \frac{2\pi(j-1)}{3} $, for $j=1,2,3$, each with multiplicity 2, and $\lambda_8=\infty$. Therefore, this example corresponds to a null recurrent QBD, with $\ell=3$ double eigenvalues on the unit circle.

Table~\ref{tab:example1} reports the obtained results. The fixed point iteration exhibits sublinear convergence, and 200,000 iterations are needed to reach a residual of order $10^{-10}$.
CR and S-CR show a linear convergence. In particular, due to the presence of $\ell>1$ eigenvalues of $A(z)$ on the unit circle, the standard shift technique is unable to restore quadratic convergence of CR. Instead, the proposed block shift technique leads to the convergence of CR in just one step. Concerning residual error, \texttt{BS-CR} is clearly the most accurate. In this example the CPU time is not meaningful since the size of the blocks is small.

\begin{table}[h]
    \centering
    \caption{Numerical results for Example 1}
    \label{tab:example1}
    \begin{tabular}{|c|c|c|c|c|}
        \hline
         & \textbf{Iterations} & \textbf{CPU time} & \textbf{Residual}   \\
        \hline
        \texttt{BS-CR} & 1 & 0.01 & $3.9 \cdot 10^{-15}$ \\
        \hline
        \texttt{S-CR} & 29 & 0.02 & $3.0\cdot 10^{-12}$  \\
        \hline
        \texttt{CR} & 30 & 0.02 & $4.4\cdot 10^{-16}$  \\
        \hline
        \texttt{FPI} & 200,000 & 0.5 & $1.5\cdot 10^{-10}$  \\
        \hline
    \end{tabular}
\end{table}

\subsection{Example 2}
In this example
\[
E_0=\begin{bmatrix}
    0 & F_1\\
    F_2 & 0
\end{bmatrix},~~E_1=0,~~E_2=\begin{bmatrix}
    0 & F_2\\
    F_1 & 0
\end{bmatrix},
\]
where
 $F_1\ge 0$ and $F_2\ge 0$ are $p\times p$ matrices such that $(F_1+F_2)\mathbf{1}=\mathbf{1}$. This example generalizes the example on page 133 in \cite{ght96}.
 Here we take
 \[
 F_1=\frac18 \begin{bmatrix}
     3 & 1\\
     1 & 2 & 1\\
     & \ddots & \ddots & \ddots \\
     && 1 & 2 & 1 \\
     &&& 1 &3
 \end{bmatrix},~~ 
 F_2=\frac{1}{10} \begin{bmatrix}
     4 & 1\\
     1 & 3 & 1\\
     & \ddots & \ddots & \ddots \\
     && 1 & 3 & 1 \\
     &&& 1 &4
 \end{bmatrix}.
 \]
 The matrix polynomial $A(z)$ has $\ell=2$ double eigenvalues at the square root of $1$.
 We test the robustness of different versions of the CR algorithm (BS-CR, S-CR, and CR) for increasing values of the size $p$. In our implementations, the iterations are terminated once the maximum number of iterations is reached, with $k_{\max}=100$ for \texttt{CR} and \texttt{S-CR}, and $k_{\max}=12$ for \texttt{BS-CR}. The numerical results are presented in Figure~\ref{fig:ex2}, which displays the residual errors for the three algorithms. As observed from the figure, \texttt{S-CR} for some case provides an error of the order $10^0$, \texttt{CR}  produces an approximation of the order of the square root of the machine precision, while \texttt{BS-CR} achieves the highest accuracy, of the order $10^{-15}$. 
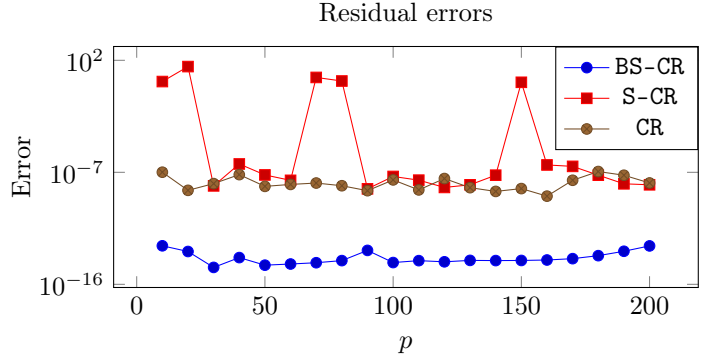
\begin{figure}[htbp]
\begin{center}
 \begin{tikzpicture}
        \begin{semilogyaxis}[
                legend style={at={(1,1)},anchor=north east},
                legend columns=1,
                width = .6\linewidth, height = .2\textheight,
                xlabel = {$p$}, 
                ylabel = {Error}, title = {Residual errors}]
            \addplot table[x index = 0, y index = 1] {example2.txt};
            \addplot table[x index = 0, y index = 2] {example2.txt};
            \addplot table[x index = 0, y index = 3] {example2.txt};
            \legend{\texttt{BS-CR},             \texttt{S-CR},
               \texttt{CR}
              }
        \end{semilogyaxis}
\end{tikzpicture}
\end{center}
\caption{Residual errors for the different versions of CR applied to Example 2}\label{fig:ex2}
\end{figure}

In Figure~\ref{fig:ex2time} we report the CPU time and the number of iterations needed by the different versions of CR. As we expect, \texttt{S-CR} and \texttt{CR} require a lower CPU time with respect to \texttt{BS-CR}. However, the asymptotic growth of the three algorithms almost the same. Indeed, the larger computational cost of each step of \texttt{BS-CR} is balanced by the lower number of iterations; moreover \texttt{CR} always reached the maximum number of iteration and, even performing more iterations, the residual does not improve.

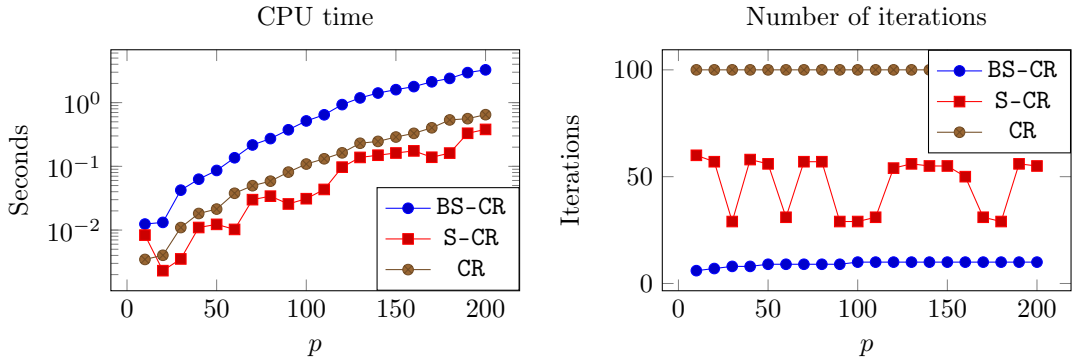
\begin{figure}[htbp]
\begin{center}
\begin{tabular}{cc}

\begin{tikzpicture}
        \begin{semilogyaxis}[
                legend style={at={(1,0)},anchor=south east},
                legend columns=1,
                width = .45\linewidth, height = .2\textheight,
                xlabel = {$p$}, 
                ylabel = {Seconds}, title = {CPU time}]
            \addplot table[x index = 0, y index = 1] {example2time.txt};
            \addplot table[x index = 0, y index = 2] {example2time.txt};
            \addplot table[x index = 0, y index = 3] {example2time.txt};
            \legend{\texttt{BS-CR},             \texttt{S-CR},
               \texttt{CR}
              }
        \end{semilogyaxis}
        \end{tikzpicture}

&

\begin{tikzpicture}
        \begin{axis}[
                legend style={at={(1,1)},anchor=north east},
                legend columns=1,
                width = .45\linewidth, height = .2\textheight,
                xlabel = {$p$}, 
                ylabel = {Iterations}, title = {Number of iterations}]
            \addplot table[x index = 0, y index = 1] {example2iter.txt};
            \addplot table[x index = 0, y index = 2] {example2iter.txt};
            \addplot table[x index = 0, y index = 3] {example2iter.txt};
            \legend{\texttt{BS-CR},             \texttt{S-CR},
               \texttt{CR}
              }
        \end{axis}
        \end{tikzpicture}

\end{tabular}
\end{center} 
\caption{CPU time and number of iterations for the different versions of CR applied to Example 2}\label{fig:ex2time}
\end{figure}

\subsection{Example 3}
The $m\times m$ coefficients of the quadratic matrix polynomial $A(z)=A_{0}+A_{1} z+A_{2} z^2$
are obtained by the coefficients of the same degree in the formula
$$A(z)=(zR-I)H(zI-G),$$
namely
$$A_{0}=HG,~~ A_{1}=-RHG-P \quad\text{and}\quad A_{2}=RH,$$
where $H$ is a nonsingular matrix. By construction, $G$ and $R$ are solutions of 
 the QMEs \eqref{eq:qme} and \eqref{eq:rqme}, respectively.
We choose
$$H = \operatorname{tridiag}(-1, 4, -1) \in \mathbb{R}^{m \times m},$$ 

\begin{equation*}
G=\left[\begin{array}{ll}
G_{11}  & G_{12} \\
0 & G_{22}
\end{array}\right],~~
R=\left[\begin{array}{ll}
R_{11}  & R_{12} \\
0 & R_{22}
\end{array}\right], 
\end{equation*}
where
\begin{equation*}
\begin{split}
&G_{11}=\operatorname{diag}\left(\mu_1, \ldots, \mu_\ell\right),~~
G_{12}=\operatorname{rand}\left(\ell, m-\ell\right),~~
G_{22}=\operatorname{diag}\left(\lambda_{1}, \ldots, \lambda_{m-\ell}\right),\\&
R_{11}=G_{11}^{-1},~~
R_{12}=\operatorname{rand}\left(\ell, m-\ell\right),~~
R_{22}=\frac{2}{3}G_{22},
\end{split}    
\end{equation*}
with $\lambda_k=\frac{1}{3}+\frac{1}{\ell+k}$, $k=1,\ldots,m-\ell$, and $\mu_i$, $i=1,\ldots,\ell$, have modulus 1 and are defined in Table~\ref{tab:example3mu}. Here, $\operatorname{rand}(h,k)$ is a $h\times k$ random matrix generated by the Matlab command $\texttt{rand}$.

\begin{table}[h]
    \centering
    \caption{Test cases for Example 3}
    \label{tab:example3mu}
    \begin{tabular}{|c|c|c|}
        \hline
    \textbf{Case}     & $\ell$ & $\mu_1,\ldots,\mu_\ell$   \\
        \hline
        1 & 2 & $0.6+0.8\mathrm{i}$, $-1$ \\
        2 & 4 & $0.6+0.8\mathrm{i}$, $1$, $-0.8-0.6\mathrm{i}$, $-1$ \\
        3 & 8 & $0.6+0.8\mathrm{i}$, $1$, $-0.8-0.6\mathrm{i}$, $-1$, $-0.6+0.8\mathrm{i}$, $1$, $0.6-0.8\mathrm{i}$, $-1$\\
     \hline
    \end{tabular}
\end{table}

One can easily verify that both $G$ and $R$ have $\ell$ eigenvalues on the unit circle and $m-\ell$ eigenvalues inside the unit circle. Moreover, $\mu_1,\ldots,\mu_\ell$ are eigenvalues of $A(z)$ of multiplicity 2.
Differently from 
the assumptions stated in Section~\ref{sec:ass}, both $G$ and $R$ may have complex entries, as well as the matrix coefficients of $A(z)$. However, the convergence theory developed in the paper can be easily extended to the complex case.

Since the implementation of the \texttt{S-CR} algorithm requires the eigenvalues of modulus $1$ and their corresponding eigenvectors to be known, we only test here the robustness of \texttt{BS-CR} and \texttt{CR} for different values of $m$ and $\ell$. 

In our implementations, the iteration is terminated once the current iterate of both algorithms satisfies $\textbf{Residual} \le 10^{-7}$, or when the maximum prescribed number of iterations, $k_{\max} = 100$, is exceeded. The latter case is labeled as “–” in the numerical tables.

The numerical results are presented in Table~\ref{tab:example3rev}, which reports the iteration counts and residual errors for the two algorithms.
  We observe that the algorithm \texttt{BS-CR} converges in fewer iterations, regardless of the tested values of $m$, and is significantly more accurate than \texttt{CR}.  The accuracy of \texttt{BS-CR} slightly worsens as $m$ increases.

\begin{table}[!h]
\caption{Numerical results of \texttt{BS-CR} and \texttt{CR} for Example 3}\label{tab:example3rev}
\begin{center}
\begin{tabular}{p{1.8cm}p{1.8cm}p{1.8cm}p{1.8cm}p{1.8cm}p{1.8cm}lp{1.8cm}p{1.8cm}p{1.8cm}}
\hline\noalign{\smallskip}

Method&&\multicolumn{2}{l}{\texttt{BS-CR}}&&\multicolumn{2}{l}{\texttt{CR}}\\

\cline{3-4}\cline{6-7}\noalign{\smallskip}

 & \textbf{Case} &\textbf{Iterations}&\textbf{Residual}&&\textbf{Iterations}&\textbf{Residual}\\

\hline\noalign{\smallskip}

  $m=16$              &$1$      &4    &1.23e-12       &&17      &6.11e-09\\
                      &$2$      &4    &8.44e-13       &&19      &5.52e-09\\
                      &$3$      &4    &1.52e-12       &&-       &3.03e-06\\ \hline
  $m=32$              &$1$      &4    &2.27e-12       &&18      &3.56e-09\\
                      &$2$      &4    &3.84e-12       &&18      &7.63e-09\\
                      &$3$      &4    &1.06e-11       &&-       &4.94e-06\\ \hline
  $m=64$              &$1$      &4    &7.49e-11       &&17      &7.19e-09\\
                      &$2$      &4    &6.58e-10       &&-       &1.38e-06\\
                      &$3$      &4    &5.90e-10       &&-       &8.39e-06\\ \hline
  $m=128$             &$1$      &4    &5.49e-11       &&19      &3.05e-09\\
                      &$2$      &4    &5.36e-10       &&-       &3.86e-06\\
                      &$3$      &4    &1.91e-10       &&-       &1.24e-05\\
\noalign{\smallskip}\hline
\end{tabular}
\end{center}
\end{table}

\section{Conclusions}\label{Conclusions}
For the unilateral quadratic matrix equations and their reversed forms, the convergence of the CR algorithm and its shift variant cannot be guaranteed when the associated polynomial has more than one eigenvalue on the unit circle. To overcome this limitation, we propose a novel iterative method, referred to as the Block-Shifted CR (\texttt{BS-CR}) algorithm, that enhances CR by combining SVD and block shift-and-deflate techniques. The resulting approach extends the applicability of existing solvers to a broader class of quadratic matrix equations. Numerical experiments confirm the effectiveness and robustness of the proposed method.





\section*{Acknowledgements}
The authors appreciate the constructive suggestions provided by the referees, which have significantly enhanced the clarity and quality of the paper.
This work was initiated and primarily conducted during the first author's visit to the University of Pisa. The first author is deeply grateful to Prof. Beatrice Meini for her kind invitation and hospitality, and to the Department of Mathematics for providing the excellent research environment that made this collaboration possible.

\section*{Declarations}


\begin{itemize}
\item Funding

This research was partly funded by: the China Scholarship Council (No. 202208625004) and the National Natural Science Foundation of China (No. 11501272);  the Italian Ministry of University and Research (MUR) through the PRIN 2022 ``Low-rank Structures and Numerical Methods in Matrix and Tensor Computations and their Application'' code: 20227PCCKZ MUR D.D. financing decree n. 104 of February 2nd, 2022 (CUP I53D23002280006); the MUR Excellence Department Project awarded to the Department of Mathematics, University of Pisa, CUP I57G22000700001. The second author is a member of GNCS-INdAM. 

\item Conflict of interest/Competing interests

The authors declare no competing interests.
\item Ethics approval and consent to participate

Not applicable.
\item Author contribution

The contributions of the authors are equal.
\end{itemize}


\bibliographystyle{elsarticle-num-names}
\bibliography{biblio}

\end{document}